\newcommand{\ind}{\mathbbm{1}}
\newtheorem{theorem}{Theorem}[section]
\newtheorem{prop}[theorem]{Proposition}
\theoremstyle{definition}
\newtheorem{definition}[theorem]{Definition}
\newtheorem{ex}[theorem]{Example}
\def \leq{\leqslant}
\newcommand{\abs}[1]{\left\lvert #1 \right\rvert}
\newcommand{\eqdef}{\mathrel{=}:}
\newcommand{\bmu}{\mbox{$\boldsymbol \mu$}}
\newcommand{\beq} {\begin{eqnarray*}}
\newcommand{\eeq} {\end{eqnarray*}}
\def \R{\mathbb{R}}
\def \Q{\mathbb{Q}}
\def \N{\mathbb{N}}
\def \ZZ{\mathbf{Z}}
\def \E{\mathbb{E}}
\def \F{\mathbb{F}}
\def \P{\mathbb{P}}
\def \Var{\hbox{{\rm Var}}}
\def \Cov{\hbox{{\rm Cov}}}
\DeclareMathOperator*{\argmin}{Argmin}
\newcommand{\1}{{1\hspace{-0.2ex}\rule{0.12ex}{1.61ex}\hspace{0.5ex}}}
\newcommand{\z}{\mathbf{z}}
\title{Global sensitivity analysis and Wasserstein spaces} 
\author[1]{Jean-Claude Fort}
\author[2]{Thierry Klein} 
\author[3]{Agn\`es Lagnoux}
\affil[1]{MAP5 Universit\'e Paris Descartes, SPC, 45 rue des Saints P\`eres, 75006 Paris, France.}
\affil[2]{Institut de Math\'ematiques de Toulouse; UMR5219. Universit\'e de Toulouse; ENAC - Ecole Nationale de l'Aviation Civile , Universit\'e de Toulouse, France}
\affil[3]{Institut de Math\'ematiques de Toulouse; UMR5219. Universit\'e de Toulouse; CNRS. UT2J, F-31058 Toulouse, France.}
\begin{document}

\maketitle

\begin{abstract}
Sensitivity indices are commonly used to quantity the relative influence of any specific group of input variables on the output of a computer code. In this paper, we focus both on computer codes the output of which is a cumulative distribution function  and on stochastic computer codes. We propose a way to perform a global sensitivity analysis for these kinds of computer codes. In the first setting, we define two indices: the first one is based on Wasserstein Fr\'echet means while the second one is based on the Hoeffding decomposition of the indicators of Wasserstein balls. Further, when dealing with the stochastic computer codes, we define an ``ideal version'' of the stochastic computer code thats fits into the frame of the first setting. Finally, we deduce a procedure to realize a second level global sensitivity analysis, namely when one is interested in the sensitivity related to the input distributions rather than in the sensitivity related to the inputs themselves. Several numerical studies are proposed as illustrations in the different settings. 
\end{abstract}

\textbf{Keywords}: Global sensitivity indices, functional computer codes, stochastic computer codes, second level uncertainty, Fréchet means, Wasserstein spaces.

\medskip

\textbf{AMS subject classification}  62G05, 62G20, 62G30, 65C60, 62E17.


\section{Introduction}

The use of complex computer models for the analysis of applications from  sciences, engineering and other fields is by now routine. For instance, in the area of marine submersion, complex computer codes have been  developed  to simulate submersion events (see, e.g.,  \cite{betancourt:hal-01998724,idier:hal-02458084} for more details). In the context of aircraft design, sensitivity analysis and metamodelling  are intensively used to optimize the design of an airplane (see, e.g.,  \cite{peteilh:hal-02866381}). Several other concrete examples of stochastic computer codes can be found in \cite{marrel2012global}.

Often, the models are expensive to run in terms of computational time. Thus it is crucial to understand the global influence of  one or several inputs  on  the  output of the system under study  with a moderate number of runs afforded \cite{sant:will:notz:2003}. When these inputs are regarded as random elements, this   problem  is  generally  called  (global)
sensitivity analysis.
  We refer to \cite{rocquigny2008uncertainty,saltelli-sensitivity,sobol1993}
 for an overview of the practical aspects of global sensitivity analysis.

A classical tool to perform global sensitivity analysis  consists in computing the Sobol indices. These indices were first introduced in \cite{pearson1915partial} and then considered by \cite{sobol2001global}. They are well tailored when the output space is $\R$. The Sobol indices compare, using the Hoeffding decomposition \cite{Hoeffding48}, the conditional variance of the output knowing some of the input variables to the total variance of the output. Many different estimation procedures of the Sobol indices have been proposed and studied in the literature. Some are based on Monte-Carlo or quasi Monte-Carlo design of experiments (see \cite{Kucherenko2017different,owen2} and references therein for more details). More recently a method based on nested Monte-Carlo \cite{GODA201763} has been developed. 
In particular, an efficient estimation of the Sobol indices can be performed through the so-called Pick-Freeze method. For the description of this method and its theoretical study (consistency, central limit theorem, concentration inequalities and Berry- Esseen bounds), we refer to \cite{janon2012asymptotic,pickfreeze} and references therein. Some other estimation procedures are based on different designs of experiments using for example polynomial chaos expansions (see \cite{Sudret2008global} and the reference therein for more details).

Since Sobol indices are variance based, they only quantify the influence of the inputs on the mean behaviour of the code. Many authors proposed other criteria to compare the conditional distribution of the output knowing some of the inputs to the distribution of the output. In \cite{owen2,ODC13,Owen12}, the authors use higher moments to define new indices while, in \cite{borgonovo2007,borgonovo2011moment, DaVeiga13}, the use of divergences or distances between measures allows to define new indices. In \cite{FKR13}, the authors use contrast functions to build indices that are goal oriented. Although these works define nice theoretical indices, the existence of a relevant statistical estimation procedure is still in most cases an open question. The case of vectorial-valued computer codes is considered in \cite{GKL18} where a sensitivity index based on the whole distribution utilizing the Cram\'er-von-Mises distance is defined. Within this framework, the authors show that the Pick-Freeze estimation procedure provides an asymptotically Gaussian estimator of the index. The definition of the Cramér-von-Mises indices has been extended to computer codes valued in general metric spaces in \cite{FGM2017,GKLM19}.

Nowadays, the computer code output is often no longer a real-valued multidimensional variable but rather a function computed at various locations. In that sense, it can be considered as a functional output. Some other times, the computer code is stochastic in the sense  that the same inputs can lead to different outputs.
When the output of the computer code is a function (for instance, a cumulative distribution function) or when the computer code is stochastic, Sobol indices are no longer well tailored. It is then crucial to define indices adapted to the functional or random aspect of the output. When the output is vectorial or valued in an Hilbert space some  generalizations  of Sobol indices are available \cite{lamboni2011multivariate,GJKL14}.  Nevertheless, these indices are still based on the Hoeffding decomposition of the output; so that they only quantify the relative influence of an input through the variance. More recently, indices based on the whole distribution have been developed
\cite{DaVeiga13,BI15,borgonovo2007}. In particular, the method relying on Cram\'er-von-Mises distance \cite{GKL18} compares the conditionnal cumulative distribution function with the unconditional  one by considering the Hoeffding  decomposition of half-space indicators (rather than the Hoeffding  decomposition of the output itself) and by integrating them. This method was then extend to codes taking values in a Riemannian manifold \cite{FGM2017} and then in general metric spaces  \cite{GKLM19}.

In this work, we focus on two kinds of computer codes: 1) computer codes the  output of which is  the cumulative distribution function of a real random variable and 2) real-valued stochastic computer codes. A first step will consist in performing global sensitivity analysis for these kinds of computer codes. Further, we will deduce how to perform second level sensitivity analysis using the tools developed in the first step. 
A code with cumulative distribution function  as output can be seen as a code taking values in the space of all probability measures on $\R$. This space can be endowed with a metric (for example, the Wasserstein metric \cite{villani2003tot}). This point of view allows to define at least two different indices for this kind of codes, generalizing the framework of  \cite{GKLM19}. The first one is based on Wasserstein Fr\'echet means while the second one is based on the Hoeffding decomposition of the indicators of Wasserstein balls. 
Further, stochastic codes  (see  Section \ref{sec:codesto} for a bibliographical study) can be seen as a ``discrete approximation'' of codes having cumulative distribution functions as values. Then it is possible to define ``natural'' indices for such stochastic codes. Finally, second level sensitivity analysis aims at considering uncertainties on the type of the input distributions and/or on the parameters of the input distributions (see  Section \ref{sec:input} for a bibliographical study). Actually, this kind of problem can be embedded in the framework of stochastic codes.

The article is organized as follows.
In Section  \ref{SIcodes}, we introduce and precisely define a general class of global sensitivity indices. We also present statistical methods to estimate these indices. In Section \ref{sec:wass}, we recall some basic facts on Wasserstein distances, Wasserstein costs and Fr\'echet means. In Section \ref{sec:GSAWS}, we define and study the statistical properties of  two new global sensitivity indices for computer codes valued in general Wasserstein spaces. Further, in Section \ref{sec:codesto}, we study the case of stochastic computer codes. Finally, Section \ref{sec:input} is dedicated to the sensitivity analysis with respect to the distributions of the input variables.

\section{Sensitivity indices for codes valued in general metric spaces}\label{SIcodes}

We consider a black-box code $f$ defined on  a product of measurable spaces $E=E_1\times E_2\times\ldots\times E_p$ ($p\in \N^*$) taking its values in a metric space $\mathcal{X}$. The output denoted by $Z$ is then given by
\begin{equation}\label{eq:model}
Z=f(X_1,\ldots,X_p).
\end{equation}
We denote by $\P$ the distribution of the output code $Z$.

\medskip

The aim of this work is to give some partial answers to the following questions.
\begin{enumerate}
\item [\bf{Question 1}] How can we perform Global Sensitivity Analysis (GSA) when the output space is the space of probability distribution functions (p.d.f.) on $\R$ or  the space of cumulative distribution functions (c.d.f.)?
\item [\bf{Question 2}] How can we perform GSA for stochastic computer codes?
\item [\bf{Question 3}] How can we perform GSA with respect to the choice of the distributions of the input variables?
\end{enumerate}

\subsection{The general metric spaces sensitivity index}\label{ssec:GMSindex}

In \cite{GKLM19}, the authors performed GSA for codes $f$ taking values in  general metric spaces.  To do so, they consider a family of test functions  parameterized by $m
 \in \N^*$ elements of $\mathcal X$ and defined by 
 \[
\begin{matrix}
& \mathcal X^{m}\times \mathcal X & \to & \R\\
& (a,x) & \mapsto & T_a(x).\\
\end{matrix}
\]
Let $\textbf{u}\subset \{1,\ldots,p\}$ and $X_{\textbf{u}}=(X_i,i\in \textbf{u})$. 
 Assuming that the test functions $T_a$ are  L$^2$-functions with respect to the product measure $\P^{\otimes m}\otimes \P$ (where $\P^{\otimes m}$ is the product $m$-times of 
the distribution of the output code $Z$) on $\mathcal X^{m}\times \mathcal X$,
they allow to defined the general metric space sensitivity index  with respect to $X_{\textbf{u}}$  by
\begin{align}\label{eq:GIM}
S_{2,\text{GMS}}^{\textbf{u}}=\frac{\int_{\mathcal X^m}\E\left[\left(\E[T_a(Z)]-\E[T_a(Z)|X_{\textbf{u}}]\right)^{2}\right]d\P^{\otimes m}(a)}{\int_{\mathcal X^m} \Var(T_a(Z))d\P^{\otimes m}(a)}
=\frac{\int_{\mathcal X^m}\Var\left(\E[T_a(Z)|X_{\textbf{u}}]\right)d\P^{\otimes m}(a)}{\int_{\mathcal X^m} \Var(T_a(Z))d\P^{\otimes m}(a)}.
\end{align}
Roughly speaking, there are two parts in the previous indices. First, for any value of $a$, we consider the numerator $\E\bigl[\left(\E[T_a(Z)]-\E[T_a(Z)|X_{\textbf{u}}]\right)^{2}\bigr]$ and the denominator $\Var(T_a(Z))$ of the classical Sobol index of $T_a(Z)$. We call this part the Sobol' part. Second, we integrate each part with respect to the measure  $\P^{\otimes m}$; this will be called the integration part.

\medskip

As explained in \cite{GKLM19}, by construction, the indices $S_{2,\text{GMS}}^{\textbf{u}}$ lie in $[0,1]$ and share the same properties as their Sobol counterparts:
\begin{enumerate}
\item the different contributions sum to 1; 
\item they are invariant by translation, by any isometric and by any non-degenerated scaling of $Z$. 
\end{enumerate}

\paragraph{Estimation}

Three different estimation procedures are available in this context. The two first methods are based on the so-called Pick-Freeze scheme.
More precisely, the Pick-Freeze scheme, considered in  \cite{janon2012asymptotic}, is 
a well tailored design of experiment. Namely, let $X^{\textbf{u}}$ be the random  vector such
that $X^{\textbf{u}}_{i}=X_{i}$ if $i\in {\textbf{u}}$ and $X^{\textbf{u}}_i=X'_i$ if $i\notin {\textbf{u}}$ where $X'_i$ is an independent copy of $X_i$. We then set 
\begin{align}\label{def:Yv}
Z^{{\textbf{u}}}:=f(X^{\textbf{u}}).
\end{align}
Further, the procedure consists in rewriting the variances of the conditional expectation in terms of covariances as follows:
\begin{align}\label{eq:trick_PF}
\Var(\E[Z|X^{\textbf u}])=\Cov(Z,Z^{\textbf u}).
\end{align}
Alternatively, the third estimation procedure that can be seen as an ingenious and effective approximation of the Pick-Freeze scheme  is based on rank statistics. Until now, it is unfortunately only available to estimate first order indices in the case of real-valued inputs.

\begin{itemize}
\item \underline{First method - Pick-Freeze}. Introduced in \cite{GKL18}, this procedure   is based on a double Monte-Carlo scheme to estimate the Cram\'er-von-Mises indices $S^{\textbf{u}}_{2,\text{CVM}}$. 
More precisely, to estimate $S_{2,\text{GMS}}^{\textbf{u}}$ in our context, one  considers the following design of experiment consisting in:
\begin{enumerate}
\item a classical Pick-Freeze $N$-sample, that is two $N$-samples of $Z$: $(Z_j,Z_j^{\textbf{u}})$, $1\leqslant j\leqslant N$;
\item $m$ another $N$-samples of $Z$ independent of $(Z_j,Z_j^{\textbf{u}})_{1\leqslant j\leqslant N}$: $W_{l,k}$, $1\leqslant l\leqslant m$, $1\leqslant k\leqslant N$.
\end{enumerate}

The empirical estimator of the numerator of $S_{2,\text{GMS}}^{\textbf{u}}$ is then given by
\begin{align*}
\widehat N_{2,\text{GMS},\text{PF}}^{\textbf{u}}=&\frac{1}{N^m} \sum_{1\leq i_1,\dots,i_{m}\leq N} \biggl[\frac 1N \sum_{j=1}^N  T_{W_{1,i_1},\cdots,W_{m,i_m}}(Z_j)T_{W_{1,i_1},\cdots,W_{m,i_m}}(Z_j^{\textbf u}) \biggr]
\\
&-\frac{1}{N^m} \sum_{1\leq i_1,\dots,i_{m}\leq N} \biggl[\frac{1}{2N} \sum_{j=1}^N  \left(T_{W_{1,i_1},\cdots,W_{m,i_m}}(Z_j)+T_{W_{1,i_1},\cdots,W_{m,i_m}}(Z_j^{\textbf u}) \right)\biggr]^2
\end{align*}
while the one of the denominator is 
\begin{align*}
\widehat D_{2,\text{GMS},\text{PF}}^{\textbf{u}}=&\frac{1}{N^m} \sum_{1\leq i_1,\dots,i_{m}\leq N} \biggl[\frac {1}{2N} \sum_{j=1}^N  \left(T_{W_{1,i_1},\cdots,W_{m,i_m}}(Z_j)^2+T_{W_{1,i_1},\cdots,W_{m,i_m}}(Z_j^{\textbf u})^2\right) \biggr]
\\
&-\frac{1}{N^m} \sum_{1\leq i_1,\dots,i_{m}\leq N} \biggl[\frac{1}{2N} \sum_{j=1}^N  \left(T_{W_{1,i_1},\cdots,W_{m,i_m}}(Z_j)+T_{W_{1,i_1},\cdots,W_{m,i_m}}(Z_j^{\textbf u}) \right)\biggr]^2.
\end{align*}

For $\mathcal X=\R^k$, $m=1$, and $T_a$ given by $T_a(x)=\ind_{\{x\leqslant a\}}$, the index $S_{2,\text{GMS},\text{PF}}^{\textbf{u}}$ is nothing more than the index $S_{2,\text{CVM}}^{\textbf{u}}$ defined in \cite{GKL18} based on the Cram\'er-von-Mises distance and the whole distribution of the output. 
Its estimator $\widehat S_{2,\text{CVM}}^{\textbf{u}}$ defined as the ratio of $\widehat N_{2,\text{GMS},\text{PF}}^{\textbf{u}}$ and $\widehat D_{2,\text{GMS},\text{PF}}^{\textbf{u}}$ with $T_a(x)=\ind_{\{x\leqslant a\}}$ has been proved to be  asymptotically Gaussian \cite[Theorem 3.8]{GKL18}. The proof relies on Donsker's theorem and the functional delta method \cite[Theorem 20.8]{van2000asymptotic}. Hence, in the general case of $S_{2,\text{GMS}}^{\textbf{u}}$, 
the central limit theorem will be still valid as soon as the collection $(T_a)_{a\in \mathcal{X}^m}$ forms a Donsker's class of functions.

\item \underline{Second method - U-statistics}. As done in \cite{GKLM19}, this method allows the practitioner to get rid of the additional random variables $(W_{l,k})$ for $l\in\{1,\ldots,m\}$ and $k\in\{1,\ldots,N\}$. The estimator is now based on U-statistics and deals simultaneously with the Sobol part and  the integration part with respect to $d\P^{\otimes m}(a)$. It suffices to rewrite $S_{2,\text{GMS}}^{\textbf{u}}$ as 
\begin{equation}\label{def:Ustat}
S_{2,\text{GMS}}^{\textbf{u}}= \frac{I(\Phi_1)-I(\Phi_2)}{I(\Phi_3)-I(\Phi_4)},
\end{equation}
where,
\begin{align}\label{def:Phi}
&\Phi_1(\z_1,\dots,\z_{m+1})= T_{z_1,\dots,z_m}(z_{m+1})T_{z_1,\dots,z_m}(z_{m+1}^{\textbf{u}}),\nonumber\\
&\Phi_2(\z_1,\dots,\z_{m+2})= T_{z_1,\dots,z_m}(z_{m+1})T_{z_1,\dots,z_m}(z_{m+2}^{\textbf{u}}),\\
&\Phi_3(\z_1,\dots,\z_{m+1})= T_{z_1,\dots,z_m}(z_{m+1})^2,\nonumber\\
&\Phi_4(\z_1,\dots,\z_{m+2})= T_{z_1,\dots,z_m}(z_{m+1})T_{z_1,\dots,z_m}(z_{m+2}),\nonumber
\end{align}
denoting by $\z_i$ the pair $(z_i,z_i^{\textbf{u}})$
and, for $l=1,\dots,4$,
\begin{align}\label{def:I}
&I(\Phi_l)=\int_{\mathcal X^{m(l)}}\Phi_l(\z_1,\dots,\z_{m(l)})d\P_2^{u,\otimes m(l)}(\z_1\dots,\z_{m(l)}),
\end{align}
with $m(1)=m(3)=m+1$ and $m(2)=m(4)=m+2$. 
Finally, one considers the empirical version of \eqref{def:Ustat} as estimator of ${S}_{2,\text{GMS}}^{\textbf{u}}$:
\begin{equation}\label{def:estUstat}
\widehat{S}_{2,\text{GMS},\text{Ustat}}^{\textbf{u}}= \frac{U_{1,N}-U_{2,N}}{U_{3,N}-U_{4,N}},
\end{equation}
where, for $l=1,\dots,4$, 
\begin{align}\label{def:estU}
U_{l,N}&= \begin{pmatrix}N\\m(l)\end{pmatrix}^{-1}\sum_{1\leq i_1<\dots<i_{m(l)}\leq N}\Phi_l^s\left(
\ZZ_{i_1},\dots,\ZZ_{i_{m(l)}}
\right)
\end{align}
and the function: 
\begin{align*}
\Phi_l^s(\z_1,\dots,\z_{m(l)})=\frac{1}{(m(l))!} \sum_{\tau \in \mathcal S_{m(l)}} \Phi_l(\z_{\tau(1)},\dots,\z_{\tau(m(l))})
\end{align*}
is the symmetrized version of $\Phi_l$. 
 In \cite[Theorem 2.3]{GKLM19}, the estimator $\widehat{S}_{2,\text{GMS}}^{\textbf{u}}$ has been proved to be consistent and asymptotically Gaussian.

Even if the Pick-Freeze procedure is quite general, it presents some drawbacks. First of all, the Pick-Freeze design of experiment is peculiar and may not be available in real applications. Moreover, it can be unfortunately very time consuming in practice. For instance, estimating all the first order Sobol indices requires $(p+1)N$ calls to the computer code.

\item \underline{Third method - Rank-based}. In \cite{Chatterjee2019}, Chatterjee proposes an efficient way based on ranks to estimate a new coefficient of correlation. 
This estimation procedure can be seen as an approximation of the Pick-Freeze scheme and then has been exploited in \cite{GGKL20} to perform a more efficient estimation of $S_{2,\text{GMS}}^{\textbf{u}}$. Anyway, this method is only well tailored for estimating first order indices  i.e.\ in  the case of $\textbf{u}=\{i\}$ for some $i\in \{1,\ldots,p\}$ and when the input $X_i\in\R$.

More precisely, an i.i.d.\ sample of pairs of real-valued random variables $(X_{i,j},Y_j)_{1\leqslant j\leqslant N}$ ($i\in\{1,\cdots,p\}$) is considered, assuming for simplicity that the  laws of $X_i$ and $Y$ are both diffuse (ties are excluded). The pairs $(X_{i,(1)},Y_{(1)}),\ldots,(X_{i,(N)},Y_{(N)})$ are rearranged
in such a way that 
\[
X_{i,(1)}< \ldots< X_{i,(N)}
\]
and, for any $j=1,\ldots,N$, $Y_{(j)}$ is the output computed from $X_{i,(j)}$. 
Let $r_j$ be the rank of $Y_{(j)}$, that is, 
\[
r_j=\#\{j'\in \{1,\dots,N\},\ Y_{(j')}\leqslant Y_{(j)}\}.
\]
The new correlation coefficient  is then given by
\begin{equation}\label{eq:coeffChatterjee}
\xi_N(X_i,Y) = 1-\frac{3\sum_{j=1}^{N-1}|r_{j+1}-r_j|}{N^2-1}.
\end{equation}

In \cite{Chatterjee2019}, it is proved that $\xi_N(X,Y)$ converges almost surely to a deterministic limit $\xi(X,Y)$ which is actually equal to $S_{2,\text{CVM}}^i$ when $Y=Z=f(X_1,\cdots,X_p)$. Further, the author also proves a central limit theorem when $X_i$ and $Y$ are independent, which is clearly not relevant in the context of sensitivity analysis (where $X_i$ and $Y$ are assumed to be dependent through the computer code).

In our context, recall that $\textbf{u}=\{i\}$ and let $Y=Z$. Let also $\pi_i(j)$ be the rank of $X_{i,j}$ in the sample $(X_{i,1},\ldots,X_{i,N})$ of $X_i$ and  define
\begin{align}\label{def:N}
N_i(j)=\begin{cases}
\pi_i^{-1} (\pi_i(j)+1) & \text{if  $\pi_i(j)+1\leqslant N$},\\
\pi_i^{-1} (1) & \text{if  $\pi_i(j)=N$}.\\
\end{cases}
\end{align}

Then the empirical estimator $\widehat S_{2,\text{GMS},\text{Rank}}^{i}$ of $S_{2,\text{GMS}}^{i}$ only requires a $N$-sample $(Z_j)_{1\leqslant j\leqslant N}$ of $Z$ and is given by the ratio between
\begin{align*}
\widehat N_{2,\text{GMS},\text{Rank}}^{i}=&\frac{1}{N^m} \sum_{1\leq i_1,\dots,i_{m}\leq N} \biggl[\frac 1N \sum_{j=1}^N  T_{ Z_{i_1},\cdots, Z_{i_m}}(Z_j)T_{ Z_{i_1},\cdots, Z_{i_m}}(Z_{N_i(j)}) \biggr]
\\
&-\frac{1}{N^m} \sum_{1\leq i_1,\dots,i_{m}\leq N} \biggl[\frac{1}{N} \sum_{j=1}^N  T_{ Z_{i_1},\cdots, Z_{i_m}}(Z_j)\biggr]^2
\end{align*}
and $\widehat D_{2,\text{GMS},\text{Rank}}$ 
\begin{align*}
\frac{1}{N^m} \sum_{1\leq i_1,\dots,i_{m}\leq N} \biggl[\frac {1}{N} \sum_{j=1}^N  T_{ Z_{i_1},\cdots, Z_{i_m}}(Z_j)^2 \biggr]
-\frac{1}{N^m} \sum_{1\leq i_1,\dots,i_{m}\leq N} \biggl[\frac{1}{N} \sum_{j=1}^N  T_{ Z_{i_1},\cdots, Z_{i_m}}(Z_j)\biggr]^2.
\end{align*}

It is worth mentioning that $Z_{N_i(j)}$ plays the same role as $Z^{i}_j$ (the Pick-Freeze version of $Z_j$) in the Pick-Freeze estimation procedure. 
Anyway, the strength of the rank-based estimation procedure lies in the fact that only one $N$-sample of $Z$ is required while $(m+2)$ samples of size $N$ are necessary in the Pick-Freeze estimation of a single index (worse, $(m+1+p)$ samples of size $N$ are required when one wants to estimates $p$ indices).

\end{itemize}

\paragraph{Comparison of the estimation procedures}

\indent

First, the Pick-Freeze estimation procedure allows the estimation of  several sensitivity indices: the classical Sobol indices for real-valued outputs, as well as their generalization for vectorial-valued codes, but also the indices based on higher moments  \cite{ODC13} and 
the Cram\'er-von-Mises indices which take into account on the whole distribution \cite{GKL18,FGM2017}. Moreover, the Pick-Freeze estimators have desirable statistical properties. More precisely, this estimation scheme has  been proved to be consistent and asymptotically normal (i.e.\ the rate of convergence is $\sqrt{N}$) in \cite{janon2012asymptotic,pickfreeze,GKLM19}. The limiting variances can be computed explicitly, allowing the practitioner to build confidence intervals.  In addition, for a given sample size $N$, exponential inequalities have been established. 
Last but not least, the sequence of estimators is asymptotically efficient from such a design of experiment (see, \cite{van2000asymptotic} for the definition of the asymptotic efficiency and \cite{pickfreeze} for more details on the result). 

However, the Pick-Freeze estimators  have two major  drawbacks. 
First, they rely on a particular experimental design that may be unavailable in practice. Second, the number of model calls to estimate all first order Sobol indices
grows linearly with the number of input parameters.  For example, if we consider $p=99$ input parameters  and only  $n=1000$ calls are allowed, then only a sample of size $n/(p+1)=10$ is available to estimate each single first order Sobol index.

\medskip

Secondly, the estimation procedure based on U-statistics has the same kind of asymptotic guarantees as consistency and asymptotic normality.  
Furthermore, the estimation scheme is reduced to $2N$  evaluations of the code. Last but not least, using the results of Hoeffding \cite{Hoeffding48} on U-statistics, the asymptotic normality is proved straightforwardly.

\medskip

Finally, embedding Chatterjee's method in the GSA framework (called rank-based method in this framework) thereby eliminates the two drawbacks of the classical Pick-Freeze estimation. 
In addition, the rank-based method  allows for the estimation of a large class of GSA indices which include the Sobol indices and the higher order moment indices proposed by Owen \cite{owen2,ODC13,Owen12}. Using a single sample of size $N$, it is now possible to estimate at the same time all the first order Sobol indices , first order Cram\'er-von-Mises indices, and other useful first order sensitivity indices as soon as all inputs are real valued.

\subsection{The universal sensitivity index}\label{ssec:Unvivindex}

Formula \eqref{eq:GIM} can be generalized in the following ways.
\begin{enumerate}
\item The point $a$ in the definition of the test functions is allowed to belong to another measurable space than $ \mathcal{X}^{m}$.
\item The  probability measure $\P^{\otimes m}$  in \eqref{eq:GIM} can be replaced by any ``admissible'' probability measure.
\end{enumerate}
Such generalizations lead to the definition of a universal sensitivity index and its procedures of estimation.

\begin{definition}
Let $a$ belongs to some measurable space $\Omega$ endowed with some probability measure $\Q$. For any $\textbf u\subset \{1,\cdots,p\}$, we define the \emph{universal sensitivity index} with respect to $X_{\textbf u}$ by 
\begin{align}\label{eq:GIMQ}
S_{2,\text{Univ}}^{\textbf u}(T_a,\Q)=\frac{\int_{\Omega}\E\left[\left(\E[T_a(Z)]-\E[T_a(Z)|X_{\textbf u}]\right)^{2}\right]d\Q(a)}{\int_{\Omega} \Var(T_a(Z))d\Q(a)}=\frac{\int_{\Omega}
\Var\left(\E[T_a(Z)|X_{\textbf u}]\right)d\Q(a)}
{\int_{\Omega} \Var(T_a(Z))d\Q(a)}.
\end{align}
\end{definition}

Notice that the index $S_{2,\text{Univ}}^{\textbf u}(T_a,\Q)$ is obtained by the integration over $a$ with respect to $\Q$ of the  Hoeffding decomposition of $T_a(Z)$. Hence,
by construction, this  index lies in $[0,1]$ and shares the same properties as its Sobol counterparts:
\begin{enumerate}
\item the different contributions sum to 1; 
\item it is invariant by translation, by any isometric and by any non-degenerated scaling of $Z$. 
\end{enumerate}

The universality is twofold. First, it allows to consider more general relevant  indices. 
Secondly, this definition encompasses, as particular cases, the classical sensitivity indices. Indeed, 
\begin{itemize}
\item the so-called Sobol index $S^{\textbf{u}}$ with respect to $X_{\textbf{u}}$ is $S_{2,\text{Univ}}^{\textbf{u}}(\operatorname{Id},\P)$;
\item the Cram\'er-von-Mises index $S_{2,\text{CVM}}^{\textbf u}$ with respect to $X_{\textbf{u}}$ is $S_{2,\text{Univ}}^{\textbf{u}}(\ind_{\cdot{} \leqslant a},\P^{\otimes d})$ where $\mathcal{X}=\R^d$ and $\Omega=\mathcal{X}$;
\item the general metric space sensitivity index $S_{2,\text{GMS}}^{\textbf{u}}$ with respect to $X_{\textbf{u}}$ is $S_{2,\text{Univ}}^{\textbf{u}}(T_a,\P^{\otimes m})$ where  $\Omega=\mathcal{X}^{m}$.
\end{itemize}
An example where $\Q$ is different from $\P$ will be considered in Section \ref{sec:GSAWS}.

\paragraph{Estimation}

Here, we assume that  $\Q$ is different from $\P^{\otimes m}$ and we follow the same tracks as for the estimation of $S_{2,\text{GMS}}^{\textbf{u}}$ in Section \ref{ssec:GMSindex}.

\begin{itemize}
\item \underline{First method - Pick-Freeze}. We use the same design of experiment as in the First method of Section \ref{ssec:GMSindex} but instead of considering that the $m$ additional $N$-samples $(W_{l,k})$ for $l\in\{1,\ldots,m\}$ and $k\in\{1,\ldots,N\}$ are drawn with respect to the distribution $\P$ of the output, they are now drawn with respect to the law $\Q$. 
More precisely, one  considers  the following design of experiment consisting in:
\begin{enumerate}
\item a classical Pick-Freeze sample, that is two $N$-samples of $Z$: $(Z_j,Z_j^{\textbf{u}})$, $1\leqslant j\leqslant N$;
\item $m$ $\Q$-distributed $N$-samples $W_{l,k}$, $l\in\{1,\ldots,m\}$ and $k\in\{1,\ldots,N\}$ that are  independent of 
$(Z_j,Z_j^{\textbf{u}})$ for $1\leqslant j\leqslant N$.
\end{enumerate}

The empirical estimator of the numerator of $S_{2,\text{Univ}}^{\textbf{u}}$ is then given by
\begin{align*}
\widehat N_{2,\text{Univ},\text{PF}}^{\textbf{u}}=&\frac{1}{N^m} \sum_{1\leq i_1,\dots,i_{m}\leq N} \biggl[\frac 1N \sum_{j=1}^N  T_{W_{1,i_1},\cdots,W_{m,i_m}}(Z_j)T_{W_{1,i_1},\cdots,W_{m,i_m}}(Z_j^{\textbf u}) \biggr]
\\
&-\frac{1}{N^m} \sum_{1\leq i_1,\dots,i_{m}\leq N} \biggl[\frac{1}{2N} \sum_{j=1}^N  \left(T_{W_{1,i_1},\cdots,W_{m,i_m}}(Z_j)+T_{W_{1,i_1},\cdots,W_{m,i_m}}(Z_j^{\textbf u}) \right)\biggr]^2
\end{align*}
while the one of the denominator is 
\begin{align*}
\widehat D_{2,\text{Univ},\text{PF}}^{\textbf{u}}=&\frac{1}{N^m} \sum_{1\leq i_1,\dots,i_{m}\leq N} \biggl[\frac {1}{2N} \sum_{j=1}^N  \left(T_{W_{1,i_1},\cdots,W_{m,i_m}}(Z_j)^2+T_{W_{1,i_1},\cdots,W_{m,i_m}}(Z_j^{\textbf u})^2\right) \biggr]
\\
&-\frac{1}{N^m} \sum_{1\leq i_1,\dots,i_{m}\leq N} \biggl[\frac{1}{2N} \sum_{j=1}^N  \left(T_{W_{1,i_1},\cdots,W_{m,i_m}}(Z_j)+T_{W_{1,i_1},\cdots,W_{m,i_m}}(Z_j^{\textbf u}) \right)\biggr]^2.
\end{align*}

As previously, it is straightforward  (as soon as the collection $(T_a)_{a\in \mathcal{X}^m}$ forms a Donsker's class of functions) to adapt the proof of Theorem \cite[Theorem 3.8]{GKL18} to prove the asymptotic normality of the estimator.

\item \underline{Second method - U-statistics}. This method is not relevant in this case since $\Q\neq\P^{\otimes d}$.

\item \underline{Third method - Rank-based}. Here, the design of experiment reduces to:
\begin{enumerate}
\item a $N$-sample of $Z$: $Z_j$, $1\leqslant j\leqslant N$;
\item a $N$-sample of $W$ that is $\Q$-distributed: $W_k$, $1\leqslant k\leqslant N$,  independent of $Z_j$, $1\leqslant j\leqslant N$.
\end{enumerate}

The empirical estimator $\widehat S_{2,\text{Univ},\text{Rank}}^{\textbf{u}}$ of $S_{2,\text{Univ}}^{\textbf{u}}$ is then given by the ratio between
\begin{align*}
\widehat N_{2,\text{Univ},\text{Rank}}^{\textbf{u}}=&\frac{1}{N^m} \sum_{1\leq i_1,\dots,i_{m}\leq N} \biggl[\frac 1N \sum_{j=1}^N  T_{W_{i_1},\cdots,W_{i_m}}(Z_j)T_{W_{i_1},\cdots,W_{i_m}}(Z_{N(j)}) \biggr]
\\
&-\frac{1}{N^m} \sum_{1\leq i_1,\dots,i_{m}\leq N} \biggl[\frac{1}{N} \sum_{j=1}^N  T_{W_{i_1},\cdots,W_{i_m}}(Z_j)\biggr]^2
\end{align*}
and  $\widehat D_{2,\text{Univ},\text{Rank}}$ 
\begin{align*}
\frac{1}{N^m} \sum_{1\leq i_1,\dots,i_{m}\leq N} \biggl[\frac {1}{N} \sum_{j=1}^N  T_{W_{i_1},\cdots,W_{i_m}}(Z_j)^2 \biggr]
-\frac{1}{N^m} \sum_{1\leq i_1,\dots,i_{m}\leq N}\biggl[\frac{1}{N} \sum_{j=1}^N  T_{W_{i_1},\cdots,W_{i_m}}(Z_j)\biggr]^2.
\end{align*}
\end{itemize}
We recall that this last method only applies for first order sensitivity indices and real-valued input variables.

\subsection{A sketch of answer to Questions 1 to 3}

In the sequel, we discuss how pertinent choices of the metric, of the class of functions $T_a$ and of the probability measure $\Q$ can provide some partial answers to Questions 1 to 3 raised at the beginning of Section \ref{SIcodes}. 
For instance, in order to answer to Question 1, we can consider that   $\mathcal{X}=\mathcal{M}_q(\R)$ is  the space of probability measures $\mu $ on $\R$ that are $L^q$-functions and we endow this space with the Wasserstein metric $W_q$ (see Section \ref{ssec:wass_metric} for some recalls on Wasserstein metrics). We will propose two  possible approaches  to define interesting sensitivity indices in this framework.
\begin{itemize}
\item In Section \ref{ssec:GSAWS_balls}, we use \eqref{eq:GIM} with $m=2$, $a=(\mu_1,\mu_2)$ and $T_a(Z)=\ind_{\{Z\in B(\mu_1,\mu_2)\}}$ where $B(\mu_1,\mu_2)$ is the open ball: $\{\mu\in\mathcal{M}_q(\R), W_q(\mu,\mu_1)<W_q(\mu_1,\mu_2)\}$.
\item In Section \ref{ssec:GSAWS_frechet}, we use the notion of Fr\'echet means on Wasserstein spaces (see Section \ref{sec:frechet}) and the index defined in \eqref{eq:GIMQ} with  appropriate choices of $a$, $T_a$, and $\Q$. 
\end{itemize}
The case of stochastic computer computer codes raised in Question 2 will be addressed as follows. A computer code (to be defined) valued in $\mathcal M_q(\R)$  will be seen as an ideal case of stochastic computer codes.  
Finally, it will be possible to treat Question 3 using the framework of Question 2.

\section{ Wasserstein spaces and random distributions}\label{sec:wass}

\subsection{Definition}\label{ssec:wass_metric}

For any $q\geqslant 1$, we define the $q$-Wasserstein distance between two probability distributions that are $L^q$-integrable and characterized by their c.d.f.'s $F$ and $G$ on $\mathbb R^d$ by: 
$$ W_q(F,G)=\min_{X\sim F,Y\sim G} \left(\mathbb E[\|X-Y\|^q]^{1/q}\right),$$
where $X\sim F$ and $Y\sim G$ mean that $X$ and $Y$ are random variables with respective c.d.f.'s $F$ and $G$.
We define the Wasserstein space $\mathcal W_q(\R^d)$ as the space of all $L^q$-integrable measures defined on $\R^d$  endowed with the $q$-Wasserstein distance $W_q$. In the sequel, any measure is identified to  its c.d.f. or in some cases to its p.d.f. 
In the unidimensional case ($d=1$), it is a well known fact that $W_q(F,G)$ has an explicitly expression given by
\begin{align}\label{eq:w_1}
W_q(F,G)=\left(\int_0^1|F^-(v)-G^-(v)|^q dv\right)^{1/q}=\mathbb E[|F^-(U)-G^-(U)|^q]^{1/q}.
\end{align} 
Here $F^-$ and $G^-$ are the generalized inverses of the increasing functions $F$ and $G$ and $U$ is a random variable uniformly distributed on $[0,1]$. Of course, $F^-(U)$ and $G^-(U)$ have c.d.f.'s $F$ and $G$.
The representation \eqref{eq:w_1} of the $q$-Wasserstein distance when $d=1$ can be generalized to a wider class of ``contrast functions''. For more details on Wasserstein spaces, one can refer to \cite{ villani2003tot} and \cite{BL18} and the references therein.
\begin{definition}
We call \emph{contrast function} any application $c$  from $\R^{2}$ to $\R$ satisfying the "measure  property" $\cal P$ defined by 
\begin{equation*}
\mathcal{ P}: \forall x\leq x'\ \mathrm{and\ } \forall y\leq y', c(x',y')-c(x',y)-c(x,y')+c(x,y)\leq 0,
\end{equation*}
 meaning that $c$ defines  a negative measure on $\mathbb R^2$.
\end{definition}
For instance, $c(x,y)=-xy$ satisfies $\cal P$.
If $c$ satisfies $\cal P$ then any function of the form $a(x)+b(y)+c(x,y)$ also satisfies $\cal P$.
If $C$ is a convex real function, $c(x,y)=C(x-y)$ satisfies $\cal P$. In particular, $c(x,y)=(x-y)^2=x^2+y^2-2xy$ satisfy $\cal P$ and actually so does $c(x,y)=|x-y|^p$ as soon as $p\geqslant  1$.
\begin{definition}
We define the \emph{Skorohod space} $\mathcal{D}:=\mathcal{D}\left([0,1]\right)$ of all distribution functions as the space of all non-decreasing functions from $\R$ to $[0,1]$ that are c\`ad-l\`ag with limit $0$ (resp. $1$) in $-\infty$ (resp. $+\infty$) equipped with the supremum norm.
\end{definition}
\begin{definition}
For any $F\in\mathcal{D}$, any $G\in\mathcal{D}$,  and any positive contrast  function $c$, we define the \emph{$c$-Wasserstein cost} by 
\[
\displaystyle W_c(F,G)=\min_{X\sim F,Y\sim G}\mathbb E \left[c(X,Y)\right]<+\infty.
\]
\end{definition}
Obviously, $W_q^q=W_c$ with $c(x,y)=|x-y|^q$. 
The following theorem can be found in (\cite{Cambabis76}).
\begin{theorem}[Cambanis, Simon, Stout \cite{Cambabis76}] Let $c$ be a contrast function. Then 
\[
W_c(F,G)=\int_0^1 c(F^{-}(v),G^{-}(v))dv=\E[c(F^{-}(U),G^{-}(U))],
\]
where $U$ is a random variable uniformly distributed on $[0,1]$.
\end{theorem}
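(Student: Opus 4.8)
The plan is to prove that, among all couplings of $F$ and $G$, the comonotone (quantile) coupling $(F^-(U),G^-(U))$ minimizes $\E[c(X,Y)]$, and that the value it attains is exactly $\int_0^1 c(F^-(v),G^-(v))\,dv$. The engine will be the combination of a Hoeffding-type covariance identity with the Fréchet--Hoeffding bound on bivariate c.d.f.'s. First I would exploit the measure property $\mathcal P$ directly: it says precisely that the mixed second difference of $c$ is nonpositive, so $c$ generates a negative (signed) measure $\mu_c$ on $\R^2$ through $\mu_c((x,x']\times(y,y']) = c(x',y')-c(x',y)-c(x,y')+c(x,y)\le 0$. Fixing a base point $(x_0,y_0)$, I would write, for $x\geq x_0$ and $y\geq y_0$,
\[
c(x,y) = c(x,y_0)+c(x_0,y)-c(x_0,y_0) + \mu_c\big((x_0,x]\times(y_0,y]\big),
\]
with the analogous identities and sign bookkeeping for the other quadrants. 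The key observation is that the first three terms are additively separable in $x$ and $y$, so their expectation under any coupling $(X,Y)$ with marginals $F,G$ depends only on $F$ and $G$ and not on the joint law.

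Next I would treat the interaction term. Writing $\mu_c((x_0,x]\times(y_0,y]) = \int\int \ind_{\{x_0<u\le x\}}\ind_{\{y_0<v\le y\}}\,d\mu_c(u,v)$ and applying Fubini yields
\[
\E\big[\mu_c\big((x_0,X]\times(y_0,Y]\big)\big] = \int\int \P(X\geq u,\,Y\geq v)\,d\mu_c(u,v).
\]
Expanding the survival function $\P(X\geq u,Y\geq v)$ shows that only the joint-c.d.f. contribution $H(u,v)=\P(X\le u,Y\le v)$ is coupling-dependent, the remaining pieces depending again only on the marginals. Hence, up to an additive constant $K(F,G)$ that is the same for every coupling,
\[
\E[c(X,Y)] = K(F,G) + \int\int H(u,v)\,d\mu_c(u,v).
\]
Now I invoke the Fréchet--Hoeffding inequality $H(u,v)\le \min(F(u),G(v))$, valid for every coupling, with equality for the quantile coupling $(F^-(U),G^-(U))$, whose joint c.d.f. is exactly $\min(F,G)$ since $F^-(U)\le u \iff U\le F(u)$ almost surely. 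Because $d\mu_c\le 0$, the functional $H\mapsto \int\int H\,d\mu_c$ is minimized when $H$ is pointwise maximal; therefore $\E[c(X,Y)]$ is minimized by the quantile coupling. Evaluating the cost there gives $\E[c(F^-(U),G^-(U))]=\int_0^1 c(F^-(v),G^-(v))\,dv$ because $U$ is uniform on $[0,1]$, which is precisely the claimed formula.

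The main obstacle will be the measure-theoretic bookkeeping rather than the geometric idea, which is essentially a rearrangement statement. I expect the delicate points to be: justifying the representation of $c$ by the signed measure $\mu_c$ and the Fubini interchange when $\mu_c$ is only $\sigma$-finite and $c$ is merely càd-làg (so jumps, the distinction between strict and non-strict inequalities in the rectangles, and the behaviour at $\pm\infty$ must all be controlled), and ensuring that the marginal-only pieces are finite so that the constant $K(F,G)$ is well defined. A clean way to dispose of these difficulties is to establish the identity first for bounded, smooth contrast functions on a compact rectangle — where $\mu_c$ has a density $\partial_{xy}c\le 0$ and each step reduces to an elementary integration by parts — and then to pass to the general case by truncation together with monotone and dominated convergence, using the finiteness of $W_c$ built into the definition of the cost.
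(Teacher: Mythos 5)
The paper does not prove this statement at all: it is quoted verbatim from the reference \cite{Cambabis76}, so there is no internal proof to compare against. Your argument is correct in outline and is in fact the classical Cambanis--Simon--Stout proof from that very reference: represent $c$ via the nonpositive measure $\mu_c$ generated by property $\mathcal P$, observe that only the joint c.d.f.\ term depends on the coupling, and conclude with the Fr\'echet--Hoeffding upper bound, which is attained by the quantile coupling $(F^-(U),G^-(U))$ because $F^-(U)\leqslant u \iff U\leqslant F(u)$. The technical caveats you flag (regularity of $c$ needed for $\mu_c$ to exist, finiteness of the marginal terms, strict versus non-strict inequalities) are exactly the hypotheses imposed in the original paper, and your limiting strategy is sound, with one practical remark: when truncating, clip the \emph{arguments} of $c$ (i.e., replace $c(x,y)$ by $c$ evaluated at $x,y$ clipped to $[-n,n]$), which preserves property $\mathcal P$ and commutes with taking quantile functions, rather than truncating the \emph{values} of $c$, which can destroy quasi-monotonicity.
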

%
%
%
%

\subsection{Extension of the Fr\'echet mean to contrast functions}\label{sec:frechet}

\begin{definition}\label{def:frechet_feature}
We call a \emph{loss function} any positive and measurable function $l$. Then, we define a \emph{Fr\'echet feature} ${\cal E}_l[ X]$ of a random variable $X$ taking values in a measurable space ${\cal M}$ as (whenever it exists):
\begin{align}\label{eq:frechet_feature}
{\cal E}_l[ X]\in \argmin_{\theta \in {\cal M} } \mathbb E[l(X,\theta)].
\end{align}
\end{definition}
When $\mathcal M$ is a metric space endowed with a distance $d=l$, the Fr\'echet feature corresponds to the classical Fr\'echet mean (see \cite{fre}). 
In particular, ${\cal E}_d[ X]$ minimizes $\mathbb E[ d^2(X,\theta)]$ which is an extension of the definition of the classical mean in $\R^d$ that minimizes  $\mathbb E[\|X-\theta\|^2]$. 

Now we consider $\mathcal M=\mathcal{D}$ and $l=W_c$. Further, Equation \eqref{eq:frechet_feature} becomes 
\begin{align*}
{\cal E}_{W_c} [ \mathbb F] \in \argmin_{G\in \mathcal{D}}\mathbb E\left[W_c(\mathbb F, G)\right].
\end{align*}
where $\mathbb F$ is a measurable function from a measurable space $\Omega$ to $\mathcal D$.

\begin{theorem} \label{th:frechet_feature} Let $c$ be a positive contrast function. Assume that the application defined  by $(\omega,v)\in\Omega\times(0,1)\mapsto \mathbb F^{-}(\omega,v)\in \R$ is measurable. In addition, assume that ${\cal E}_c [ \mathbb F] $ exists and is unique. Then there exists a unique Fr\'echet mean of $\mathbb E[ c(\mathbb F^{-}(v),s)]$ denoted by 
${\cal E}_c  [\mathbb F^{-}](v)$ and we have
\[
({\cal E}_c  [\mathbb F])^{-}(v) = {\cal E}_c  [\mathbb F^{-}](v) = \argmin_{s\in \mathbb R} \mathbb E[ c(\mathbb F^{-}(v),s)].
\]
\end{theorem}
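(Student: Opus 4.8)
The plan is to reduce the global (infinite–dimensional) minimisation over $\mathcal{D}$ to a one–parameter family of scalar minimisations indexed by the quantile level $v\in(0,1)$, to solve each of them, and then to check that the resulting collection of minimisers is itself a legitimate quantile function.

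First I would invoke the Cambanis--Simon--Stout theorem to rewrite, for every $G\in\mathcal{D}$,
\[
\mathbb{E}\left[W_c(\mathbb{F},G)\right]=\mathbb{E}\left[\int_0^1 c(\mathbb{F}^{-}(v),G^{-}(v))\,dv\right].
\]
Since $c$ is positive and $(\omega,v)\mapsto \mathbb{F}^{-}(\omega,v)$ is measurable, Tonelli's theorem lets me exchange the expectation and the integral, so that the Fréchet functional becomes $\int_0^1 \mathbb{E}[c(\mathbb{F}^{-}(v),G^{-}(v))]\,dv$. This immediately yields the lower bound
\[
\mathbb{E}\left[W_c(\mathbb{F},G)\right]\geqslant \int_0^1 \Bigl(\inf_{s\in\R}\mathbb{E}[c(\mathbb{F}^{-}(v),s)]\Bigr)\,dv,
\]
valid for every $G$, in which the two variables $v$ and $G^{-}(v)$ have been decoupled. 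The natural candidate minimiser is therefore $g^{*}(v)=\argmin_{s\in\R}\mathbb{E}[c(\mathbb{F}^{-}(v),s)]$ (taking, say, the smallest minimiser to fix a measurable selection), and the whole point is to show that $g^{*}$ is the generalised inverse of some $G^{*}\in\mathcal{D}$ attaining this bound.

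The crux is to prove that $g^{*}$ is non-decreasing in $v$; this is exactly where the measure property $\mathcal{P}$ enters. Fix $v_1<v_2$ and set $X_i=\mathbb{F}^{-}(v_i)$, so that $X_1\leqslant X_2$ pointwise in $\omega$ because each realisation of $\mathbb{F}^{-}(\omega,\cdot)$ is a quantile function. Writing $h_i(s)=\mathbb{E}[c(X_i,s)]$, property $\mathcal{P}$ applied with $x=X_1\leqslant x'=X_2$ and $y=s\leqslant y'=s'$ gives, after taking expectations, $h_2(s')-h_2(s)\leqslant h_1(s')-h_1(s)$ for all $s\leqslant s'$; that is, $h_2-h_1$ has decreasing differences. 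A standard monotone comparative statics argument (Topkis) then forces any minimiser of $h_2$ to dominate any minimiser of $h_1$: assuming $g^{*}(v_2)<g^{*}(v_1)$ and applying the decreasing-difference inequality with $s=g^{*}(v_2)\leqslant s'=g^{*}(v_1)$ yields $0\leqslant h_2(g^{*}(v_1))-h_2(g^{*}(v_2))\leqslant h_1(g^{*}(v_1))-h_1(g^{*}(v_2))\leqslant 0$, so $g^{*}(v_2)$ would be a minimiser of $h_1$ strictly below the smallest one, a contradiction. Hence $g^{*}$ is non-decreasing. I expect this monotonicity step, together with the measurability of $v\mapsto g^{*}(v)$ (which I would secure via a measurable selection theorem applied to the argmin correspondence), to be the main obstacle.

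Finally, a non-decreasing measurable function on $(0,1)$ coincides almost everywhere with the generalised inverse $(G^{*})^{-}$ of a unique $G^{*}\in\mathcal{D}$. By construction $G^{*}$ saturates the lower bound, so it minimises $G\mapsto \mathbb{E}[W_c(\mathbb{F},G)]$ and hence equals ${\cal E}_c[\mathbb{F}]$ by the assumed uniqueness of the Fréchet feature. This gives $({\cal E}_c[\mathbb{F}])^{-}(v)=g^{*}(v)=\argmin_{s\in\R}\mathbb{E}[c(\mathbb{F}^{-}(v),s)]$ for almost every $v$, which is the announced identity. Pointwise existence is built into the construction of $g^{*}$, and pointwise uniqueness for almost every $v$ follows by transferring the global uniqueness: if the scalar argmin were multivalued on a set of positive Lebesgue measure, one could splice together two distinct measurable selections into two distinct quantile functions, both minimising the Fréchet functional, contradicting the uniqueness of ${\cal E}_c[\mathbb{F}]$.
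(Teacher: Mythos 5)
Your proposal is correct, and its skeleton is the same as the paper's: apply the Cambanis--Simon--Stout representation, exchange expectation and integral by Fubini/Tonelli, and then play the pointwise minimiser against the global minimiser ${\cal E}_c[\mathbb F]$, concluding by the assumed uniqueness. The difference is in what you prove along the way. The paper's proof, after establishing the pointwise lower bound, simply writes ``in particular, for $G^- = {\cal E}_c[\mathbb F^{-1}]$'' in the global-minimality inequality --- that is, it plugs the pointwise argmin function into the Fr\'echet functional as if it were automatically the generalised inverse of an element of $\mathcal D$, without verifying that $v\mapsto \argmin_s \E[c(\mathbb F^-(v),s)]$ is non-decreasing and measurable. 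You make exactly this step explicit: the decreasing-differences inequality $h_2(s')-h_2(s)\leq h_1(s')-h_1(s)$ obtained by taking expectations in property $\mathcal P$ (with $\mathbb F^-(v_1)\leq \mathbb F^-(v_2)$ pointwise), followed by the Topkis-style comparison of smallest minimisers, plus a measurable selection, shows the candidate is a legitimate quantile function; your splicing argument likewise upgrades the global uniqueness hypothesis into a.e.\ uniqueness of the scalar argmin, which the statement asserts but the paper's proof does not address. What the paper's route buys is brevity; what yours buys is a complete argument that also makes visible where the measure property $\mathcal P$ of the contrast (and not merely its positivity) is genuinely used.
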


\begin{proof}[Proof of Theorem \ref{th:frechet_feature}]
Since $c$ satisfies $\cal P$, we have
\[
\mathbb E[W_c(\mathbb F, G)]=\mathbb E\left[\int_0^1 c(\mathbb F^-(v),G^-(v))dv\right]=\int_0^1 \mathbb E[c(\mathbb F^-(v),G^-(v))]dv,
\]
by Fubini's theorem. 
Now, for all $v\in (0,1)$, the quantity  $\mathbb E[c(\mathbb F^-(v),G^-(v))]$ is minimum for $G^-(v)={\cal E}_c [\mathbb F^{-1}](v)$. 
\begin{align*}
\int_0^1 \mathbb E[c(\mathbb F^-(v),{\cal E}_c [\mathbb F^{-1}](v))]dv \leqslant \int_0^1 \mathbb E[c(\mathbb F^-(v),G^-(v))]dv
\end{align*}
and, in particular, for $G^-={\cal E}_c [\mathbb F]^{-1}$, one gets
\begin{align*}
\int_0^1 \mathbb E[c(\mathbb F^-(v),{\cal E}_c [\mathbb F^{-1}](v))]dv \leqslant \int_0^1 \mathbb E[c(\mathbb F^-(v),{\cal E}_c [\mathbb F]^{-1}(v))]dv.
\end{align*}
Conversely, by the definition of ${\cal E}_c [\mathbb F]^{-1}$, we have for all $G$, 
\begin{align*}
\int_0^1 \mathbb E[c(\mathbb F^-(v),{\cal E}_c [\mathbb F]^{-1}(v))]dv &\leqslant \int_0^1 \mathbb E[c(\mathbb F^-(v),G^-(v))]dv
\end{align*}
and, in particular, for $G^-={\cal E}_c [\mathbb F^{-1}]$, one gets
\begin{align*}
\int_0^1 \mathbb E[c(\mathbb F^-(v),{\cal E}_c [\mathbb F]^{-1}(v))]dv &\leqslant \int_0^1 \mathbb E[c(\mathbb F^-(v),{\cal E}_c [\mathbb F^{-1}](v))]dv.
\end{align*}
The theorem then follows by the uniqueness of the minimizer.
\end{proof}

In the previous theorem, we propose a very genereal non  parametric framework for the random element  $\mathbb{F}$ together with some assumptions on existence and uniqueness of the Fr\'echet feature and measurability of the map $(\omega,v)\mapsto \mathbb F^{-}(\omega,v)$. Nevertheless, it is possible to  construct explicit parametric models for $\mathbb{F}$ for whom theses assumptions are satisfied. For instance, the authors of \cite{BK2015} ensures measurability for some parametric models on $\mathbb{F}$ using results of \cite{MR2643592}.
Notice that in \cite{FKR2013} a new sensitivity indice is defined for each feature associated to a contrast function. In section \ref{ssec:GSAWS_frechet} we will restrict our analysis to  Fr\'echet means, hence to Sobol indices.

\subsection{Examples}

The Fr\'echet mean in the $\mathcal W_2(\R)$ space is the inverse of the function $v\mapsto \mathbb E\ \left[ \mathbb F^{-}(v)\right]$.
Another example is the Fr\'echet median. Since the median in $\R$ is related to the $L^1$ cost, the Fr\'echet $\mathcal W_1(\R)$ median of a random c.d.f.\ is
\[
(\mbox{Med} (\mathbb F)^{-}(v)\in \mbox{Med}(  \mathbb F^{-}(v)).
\]

More generally, we recall that, for $\alpha\in (0,1)$, the $\alpha$-quantile in $\R$ 
is the Fr\'echet mean associated to the contrast function
$c_{\alpha}(x,y)=(1-\alpha)(y-x) \ind_{x-y<0}+\alpha (x-y) \ind_{x-y\geqslant  0}$, also called the \emph{pinball function}. Then we
can define an $\alpha$-quantile $q_\alpha(\mathbb F)$ of a random c.d.f.\ as:
\[
(q_\alpha (\mathbb F))^{-}(v)\in q_\alpha(  \mathbb F^{-}(v)),
\]
where $q_\alpha(X)$ is the set of the $\alpha$-quantiles of a random variable $X$ taking values in $\mathbb R$. Naturally, taking $\alpha=1/2$ leads to the median.

Let us illustrate the previous definitions on an example.
Let $X$ be a random variable with c.d.f. $F_0$  which is assumed to be  increasing and continuous.
Let also $m$ and  $\sigma$ two real random variables such that $\sigma$>0. Then we consider the random c.d.f.\ $\mathbb F$ of $\sigma X+m$:  
\[
\mathbb F(x)= F_0\left(\frac{x- m}{\sigma}\right) \quad \text{and} \quad \mathbb F^{-1}(v)=\sigma F_0^{-1}(v)+m.
\]
Naturally, the Fr\'echet mean of $\mathbb F$ is ${\cal E} [\mathbb F](x)=F_0\left({x-\E[ m]}/{\E[\sigma]}\right)$ and its $\alpha$-quantile  is given by 
\[
(q_\alpha (\mathbb F))^{-1}(v)=q_\alpha(  \sigma F_0^{-1}(v)+m).\]

\section{Sensitivity analysis in general Wasserstein spaces}\label{sec:GSAWS}

In this section, we consider that our computer code is $\mathcal W_q(\R)$-valued; namely, the output of an experiment is the c.d.f.\ or the p.d.f.\ of a measure $\mu\in \mathcal W_q(\R)$.  
For instance, in \cite{BILGLR16}, \cite{LG13} and \cite{MNP15}, the authors deal with p.d.f.-valued computer codes (and stochastic computer codes). In other words, they define the following application:
\begin{eqnarray}\label{def:code_density}
f  : & E  & \to  \mathcal F\\
& x & \mapsto  f(x)\nonumber
\end{eqnarray} 
where $\mathcal F$ is the set of p.d.f.'s:
\[
\mathcal F= \left\{g\in L^1(\R);\ g\geqslant 0,\ \int_{\R} g(x)dx=1\right\}.
\]

Here, we choose to identify any element of $\mathcal W_q(\R)$ with its c.d.f.  In this framework, the output of the cmoputer code is then a c.d.f.\ denoted by
\begin{equation}\label{eq:modelW}
\mathbb{F}=f(X_1,\ldots,X_p).
\end{equation}
Here, $\P$ denotes the law of the c.d.f.\ $\F$. In addition, we set $q=2$. 
The case of a general $q$ can be handled analogously.

\subsection{Sensitivity anlaysis using Equation \texorpdfstring{\eqref{eq:GIM}}{f} and Wasserstein balls}\label{ssec:GSAWS_balls}

Consider $F$, $F_1$, and $F_2$ three elements of  $\mathcal W_2 (\R)$ and, for $a=(F_1,F_2)$, the family of test functions
\begin{equation}\label{def:Ta_W}
T_a(F)=T_{(F_1,F_2)}(F)=\1_{W_2 (F_1,F)\leqslant W_2 (F_1,F_2)}.
\end{equation}
Then, for all $\textbf u\subset \{1,\cdots,p\}$,  \eqref{eq:GIM} becomes 
\begin{align}\label{eq:SobolW}
S_{2,W_2 }^{\textbf u}&= S_{2,\text{Univ}}^{\textbf u}((F_1,F_2,F)\mapsto T_{F_1,F_2}(F),\P^{\otimes 2})\nonumber\\
&=\frac{\int_{\mathcal W_2 (\R)\times \mathcal W_2 (\R)}\E\left[\left(\E[\1_{W_2 (F_1,\mathbb{F})\leqslant W_2 (F_1,F_2)}]-\E[\1_{W_2 (F_1,\F)\leqslant W_2 (F_1,F_2)}|X^{\textbf u}]\right)^{2}\right]d\P^{\otimes 2}(F_1,F_2)}{\int_{\mathcal W_2 (\R)\times \mathcal W_2 (\R)} \Var(\1_{W_2 (F_1,\mathbb{F})\leqslant W_2 (F_1,F_2)})d\P^{\otimes 2}(F_1,F_2)}\nonumber\\
&=\frac{\int_{\mathcal W_2 (\R)\times \mathcal W_2 (\R)}\Var\left(\E[\1_{W_2 (F_1,\mathbb{F})\leqslant W_2 (F_1,F_2)}\vert X^{\bf u}]\right) d\P^{\otimes 2}(F_1,F_2)}{\int_{\mathcal W_2 (\R)\times \mathcal W_2 (\R)} \Var(\1_{W_2 (F_1,\mathbb{F})\leqslant W_2 (F_1,F_2)})d\P^{\otimes 2}(F_1,F_2)}.
\end{align}
As explained in Section \ref{ssec:GMSindex}, $S_{2,W_2 }^{\textbf u}$ is obtained by integration over $a$ of the  Hoeffding decomposition of $T_a(\F)$. Hence,
by construction, this  index lies in $[0,1]$ and shares the same properties as its Sobol counterparts:
\begin{enumerate}
\item the different contributions sum to 1; 
\item it is invariant by translation, by any isometric and by any non-degenerated scaling of $\F$. 
\end{enumerate}

\subsection{Sensitivity analysis using Equation \texorpdfstring{\eqref{eq:GIMQ}}{f} and Fr\'echet means}\label{ssec:GSAWS_frechet}

In the classical framework where the output $Z$ is real, we recall that the Sobol index with respect to $X_{\textbf u}$ is defined by 
\begin{equation}\label{sobol1}
S^{\textbf u}=\frac{\text{Var}(\mathbb E[Z|X_{\textbf u}])}{\text{Var}(Z)}=\frac{\Var(Z)-\mathbb E[\Var(Z|X_{\textbf u})]}{\Var(Z)},
\end{equation}
by the property of the conditional expectation. 
In one hand, one may extend this formula to the framework of this section where the output of interest is the c.d.f. $\mathbb F$:
\[
S^{\textbf u}(\mathbb F)=\frac{\Var(\mathbb F)-\mathbb E[\Var(\mathbb F|X_{\textbf u}))]}{\Var (\mathbb F)},
\]
where $\Var(\mathbb F)=\E[ W_2^2(\mathbb F,{\cal E}_{W_2 }(\mathbb F))]$ with ${\cal E}_{W_2 }(\mathbb F)$ the Fr\'echet mean of $\F$.
From Theorem \ref{th:frechet_feature}, we get
\[
\Var(\F)=\E\left[\int_0^1 |\F^-(v)-{\cal E}(\F)^-(v)|^2dv\right]=\E\left[\int_0^1 |\F^-(v)-\E[\F^-(v)]|^2dv\right]=\int_0^1\Var(\F^-(v))dv
\]
 leading to 
 \begin{align}
 S^{\textbf u}(\F)&=\frac{\int_0^1\Var(\F^-(v))dv-\int_0^1\E [\Var(\F^-(v)|X_{\textbf u})]dv}{\int_0^1\Var(\mathbb F^-(v))dv}=\frac{\int_0^1\Var(\E[\F^-(v)|X_{\textbf u}])dv}{{\int_0^1\Var(\F^-(v))dv}}.\label{eq:SobolW2} 
 \end{align}
 In the other hand, one can consider  \eqref{eq:GIMQ}, with 
  $m=1$, 
\begin{align}\label{def:Ta_F}
T_v(\mathbb F)=\mathbb F^-(v)
\end{align}
and $\Q$  the uniform probability measure on $[0,1]$. In that case,
\begin{equation*}
\Var(\F)=\E\left[\int_0^1 |\F^-(v)-{\cal E}_{W_2 }(\F)^-(v)|^2dv\right]=\int_0^1\Var(\F^-(v))dv=\E [W_2 ^2(\F,{\cal E}_{W_2 }(\F))]. 
\end{equation*}
  Then 
  \begin{equation*}
S_{2,\text{Univ}}^{\textbf u}(T_v,\mathcal U([0,1]))=\frac{\int_0^1\E\left[\left({\cal E}_{W_2 }(\F)^-(v)-{\cal E}_{W_2 }(\F|X_{\textbf u})^-(v)\right)^{2}\right]dv}{\int_0^1\Var(\F^-(v))dv}=\frac{\E\left[ W_2 ^2({\cal E}_{W_2 }(\F|X_{\textbf u}),{\cal E}_{W_2 }(\F))\right]}{\E \left[W_2 ^2(\F,{\cal E}_{W_2 }(\F))\right]}. 
 \end{equation*}
 is exactly the same as $S^{\textbf u}(\mathbb F)$ in \eqref{eq:SobolW2}.
Thus, as explained in Section \ref{ssec:Unvivindex}, $S^{\textbf u}(\mathbb F)$ lies in $[0,1]$ and:
\begin{enumerate}
\item the different contributions sum to 1;
\item it is invariant by translation, by any isometric and by any non-degenerated scaling of $\F$.
\end{enumerate}

\subsection{Estimation procedure}\label{ssec:GSAWS_est}

As noticed in the previous section, both 
\[
S_{2,W_2 }^{\textbf u}= S_{2,Univ}^{\textbf u}(T_a,\P^{\otimes 2})
\]
with $T_a$ defined in \eqref{def:Ta_W} and 
\[
S^{\textbf u}(\mathbb F)=S_{2,\text{Univ}}^{\textbf u}(T_v,\mathcal U([0,1]))
\] 
with $T_v$ defined in \eqref{def:Ta_F}, are particular cases of indices of the form \eqref{eq:GIMQ}. 

When $a$ belongs to the same space as the output and when $\Q$ is equal to $\P^{\otimes m}$, one may first use the Pick-Freeze estimations of the indices given in \eqref{eq:SobolW}
and \eqref{eq:SobolW2}. To do so, it is convenient once again to use \eqref{eq:trick_PF} leading to
\begin{align}\label{eq:SobolW_PF}
S_{2,W_2 }^{\textbf u}
&=\frac{\int_{\mathcal W_2 (\R)\times \mathcal W_2 (\R)}\Cov\left(
\1_{W_2 (F_1,\mathbb{F})\leqslant W_2 (F_1,F_2)},
\1_{W_2 (F_1,\mathbb{F}^{\textbf u})\leqslant W_2 (F_1,F_2)}
\right) d\P^{\otimes 2}(F_1,F_2)}{\int_{\mathcal W_2 (\R)\times \mathcal W_2 (\R)} \Var(\1_{W_2 (F_1,\mathbb{F})\leqslant W_2 (F_1,F_2)})d\P^{\otimes 2}(F_1,F_2)}
\end{align}
and
\begin{align}
 S^{\textbf u}(\F)&=\frac{\int_0^1\Cov\left(\F^-(v),\F^{-,\textbf u}(v)\right)dv}{{\int_0^1\Var(\F^-(v))dv}}\label{eq:SobolW2_PF} 
\end{align}
where $\mathbb{F}^{\textbf u}$ and $\F^{-,\textbf u}$ are respectively the Pick-Freeze versions of $\mathbb{F}$ and $\F^{-}$.
Secondly, one may resort to the estimations  based on U-statistics together on the Pick-Freeze design of experiment.
Thirdly, it is also possible and easy to obtain rank-based estimations in the vein of  \eqref{eq:coeffChatterjee}.

\subsection{Numerical comparison of both indices}

\begin{ex}[Toy model]\label{ex:toy_model}
Let $X_1,X_2,X_3$ be $3$ independent and positive random variables. 
We consider the c.d.f.-valued code $f$, the output of which is given by 
\begin{equation}\label{def:toy_model_1_interaction}
\mathbb{F}(t)=\frac{t}{1+X_1+X_2+X_1X_3}\1_{0\leqslant t\leqslant 1+X_1+X_2+X_1X_3}+ \1_{1+X_1+X_2+X_1X_3<t},
\end{equation} 
so that
\begin{equation}
\mathbb{F}^{-1}(v)=v\Bigl(1+X_1+X_2+X_1X_3\Bigr).
\end{equation} 
In addition, one gets 
\begin{align*}
\Var\left(\mathbb{F}^{-1}(v)\right)&=v^2\left(\Var(X_1(1+X_3))+\Var(X_2)\right)\\
&=v^2\left(\Var(X_1)\Var(X_3)+\Var(X_1)(1+\E[X_3])^2+\Var(X_3)\E[X_1]^2+\Var(X_2)\right)\\
\end{align*}
and 
\begin{align*}
&\E\left[\mathbb{F}^{-1}(v)|X_1\right]=v\Bigl(1+X_1(1+\E[X_3])+\E[X_2]\Bigr),\\
&\E\left[\mathbb{F}^{-1}(v)|X_2\right]=v\Bigl(1+\E[X_1](1+\E[X_3])+X_2\Bigr),\\
&\E\left[\mathbb{F}^{-1}(v)|X_3\right]=v\Bigl(1+\E[X_1](1+X_3)+\E[X_2]\Bigr),\\
&\E\left[\mathbb{F}^{-1}(v)|X_1X_3\right]=v\Bigl(1+X_1(1+X_3)+\E[X_2]\Bigr),
\end{align*}
and finally
\begin{align*}
&\Var\left(\E\left[\mathbb{F}^{-1}(v)|X_1\right]\right)=v^2(1+\E[X_3])^2\Var(X_1),\\
&\Var\left(\E\left[\mathbb{F}^{-1}(v)|X_2\right]\right)=v^2\Var(X_2),\\
&\Var\left(\E\left[\mathbb{F}^{-1}(v)|X_3\right]\right)=v^2\E[X_1]^2\Var(X_3),\\
&\Var\left(\E\left[\mathbb{F}^{-1}(v)|X_1,X_3\right]\right)
=v^2\left(\Var(X_1)\Var(X_3)+\Var(X_1)(1+\E[X_3])^2+\Var(X_3)\E[X_1]^2\right).
\end{align*}
For ${\bf u}  =i\in{1,2,3}$ or ${\bf u}=\{1,3\}$, it remains to plug the previous formulas in \eqref{eq:SobolW2} to get the explicit expressions of the indices $S^{\bf u}(\mathbb{F})$.

Now, in order to get a closed formula for the indices defined in \eqref{eq:SobolW},  we assume $X_i$ is Bernoulli distributed with parameter $0<p_i<1$.
In \eqref{eq:SobolW}, the distributions $F_1$ and $F_2$   can be either $\mathcal{U}([0,1])$, $\mathcal{U}([0,2])$,  $\mathcal{U}([0,3])$, or $\mathcal{U}([0,4])$ with respective probabilities $q_1=(1-p_1)(1-p_2)$, $q_2=(1-p_1)p_2+p_1(1-p_2)(1-p_3)$, $q_3=p_1((1-p_2)p_3+p_2(1-p_3))$, and $q_4=p_1p_2p_3$. In the sequel, we give, for all sixteen possibilities for the distribution of $(F_1,F_2)$, the corresponding contributions for the numerator and for the denominator of \eqref{eq:SobolW}.

With probability $p_{1,1}=(1-p_1)^2(1-p_2)^2$,  $F_1$ and $F_2\sim \mathcal{U}([0,1])$. Then $W_2^2(F_1,F_2)=0$, 
$W_2^2(F_1,\mathbb{F})=\frac13(X_1+X_2+X_1X_3)^2$, and 
$W_2^2(F_1,\mathbb{F})\leqslant W_2^2(F_1,F_2)$ if and only if $X_1+X_2+X_1X_3=0$. 
Since $\P\left(X_1+X_2+X_1X_3=0\right)=(1-p_1)(1-p_2)$, the contribution $d_{1,1}$ of this case to the denominator is thus
\begin{equation*}
d_{1,1}=q_{1,1}(1-q_{1,1}) \quad \text{with $q_{1,1}=(1-p_1)(1-p_2)$}.
\end{equation*}
Moreover,
 \begin{align*}
 &\E[\ind_{X_1+X_2+X_1X_3= 0}|X_1]=\P\Bigl(X_1+X_2+X_1X_3= 0|X_1\Bigr)=\ind_{X_1=0}\P(X_2=0)=(1-p_2)\ind_{X_1=0}.
  \end{align*}
so that, the contribution to the numerator is here given by
\begin{equation*}
 n_{1,1}^1= \Var(\E[\ind_{X_1+X_2+X_1X_3= 0}|X_1])=p_1(1-p_1)(1-p_2)^2.
\end{equation*}
Similarly,  one gets 
\[
n_{1,1}^2= \Var(\E[\ind_{X_1+X_2+X_1X_3= 0}|X_2])=p_2(1-p_2)(1-p_1)^2 \quad \text{and} \quad n_{1,1}^3= 0.
\]
Moreover, regarding the indices with respect to $X_1$ and $X_3$, 
\[
\E[\ind_{X_1+X_2+X_1X_3= 0}|X_1,X_3]=\P\Bigl(X_1+X_2+X_1X_3= 0|X_1,X_3\Bigr)=\ind_{X_1=0}\P(X_2=0)=(1-p_2)\ind_{X_1=0}
\]
and the contribution to the numerator is given by
\[
n_{1,1}^{1,3}=\Var(\E[\ind_{X_1+X_2+X_1X_3= 0}|X_1,X_3])=p_1(1-p_1)(1-p_2)^2.
\]
The remaining fifteen cases can be treated similarly and are gathered (with the first case developed above) in the following table. Finally, for $k=1,\ldots,3$, one may compute the explicit expression of $S_{2,W_2}^k$:
\begin{align*}
S_{2,W_2}^k&=\frac{\int_{\mathcal W_2 (\R)\times \mathcal W_2 (\R)}\Cov\left(
\1_{W_2 (F_1,\mathbb{F})\leqslant W_2 (F_1,F_2)},
\1_{W_2 (F_1,\mathbb{F}^{\textbf u})\leqslant W_2 (F_1,F_2)}
\right) d\P^{\otimes 2}(F_1,F_2)}{\int_{\mathcal W_2 (\R)\times \mathcal W_2 (\R)} \Var(\1_{W_2 (F_1,\mathbb{F})\leqslant W_2 (F_1,F_2)})d\P^{\otimes 2}(F_1,F_2)}=\frac{\sum_{i,j} p_{i,j} n_{i,j}^k}{\sum_{i,j} p_{i,j} d_{i,j}}.
\end{align*}
Some numerical values have not been explicited in the table but given below:
\begin{small}
\begin{eqnarray*}
& \text{Case 2} 
& \Var(\ind_{X_1=1}(1-(1-p_2)\ind_{X_3=0}))
=p_1(1-p_1)(1-(1-p_2)(1-p_3))^2+p_1(1-p_2)^2p_3(1-p_3),\\
&\\
& \text{Case 6} 
& \Var(\ind_{X_1=1}(p_2-(1-p_2)\ind_{X_3=0}))
=p_1(1-p_1)(p_2-(1-p_2)(1-p_3))^2+p_1(1-p_2)^2p_3(1-p_3),\\
&\\
& \text{Case 11}
& \Var(\ind_{X_1=1}(p_2+(1-2p_2)\ind_{X_3=1}))
=p_1(1-p_1)(p_2+(1-2p_2)p_3)^2+p_1(1-2p_2)^2p_3(1-p_3),\\
&\\
& \text{Case 15}
 & \Var(\ind_{X_1=1}(p_2+(1-p_2)\ind_{X_3=1}))
 =p_1(1-p_1)(p_2+(1-p_2)p_3)^2+p_1(1-p_2)^2p_3(1-p_3).
\end{eqnarray*}
\end{small}
\begin{table}[htp!]
\begin{center}
\begin{footnotesize}
\begin{tabular}{ll|ll}
\hline
Case 1 & $F_1\sim \mathcal{U}([0,1])$, $F_2\sim\mathcal{U}([0,1])$ & Case 2 & $F_1\sim \mathcal{U}([0,1])$, $F_2\sim\mathcal{U}([0,2])$\\
\hline
\multicolumn{2}{c|}{} & \multicolumn{2}{c}{}\\
Prob.  & $q_1^2$ & Prob. & $q_1q_2$\\
Num. 1  & $p_1(1-p_1)(1-p_2)^2$ & Num. 1 & $p_1(1-p_1)(p_2+p_3-p_2p_3)^2$\\
Num. 2 & $(1-p_1)^2p_2(1-p_2)$ & Num. 2 & $p_1^2p_2(1-p_2)(1-p_3)^2$ \\
Num. 3 & $0$ & Num. 3 & $p_1^2(1-p_2)^2p_3(1-p_3)$ \\
Num. 1,3 & $p_1(1-p_1)(1-p_2)^2$ & Num. 1,3 & $\Var(\ind_{X_1=1}(1-(1-p_2)\ind_{X_3=0})$ \\
$q$ Den. & $(1-p_1)(1-p_2)$ & $q$ Den. & $(1-p_1)+p_1(1-p_2)(1-p_3)$\\
\multicolumn{2}{c|}{} & \multicolumn{2}{c}{}\\
\hline
Case 3 & $F_1\sim \mathcal{U}([0,1])$, $F_2\sim\mathcal{U}([0,3])$ & Case 4 & $F_1\sim \mathcal{U}([0,1])$, $F_2\sim\mathcal{U}([0,4])$\\
\hline
\multicolumn{2}{c|}{} & \multicolumn{2}{c}{}\\
Prob.  & $q_1q_3$ & Prob. & $q_1q_4$\\
Num. 1  & $p_1(1-p_1)p_2^2p_3^2$ & Num. 1 & $0$\\
Num. 2 & $p_1^2p_2(1-p_2)p_3^2$ & Num. 2 & $0$ \\
Num. 3 & $p_1^2p_2^2p_3(1-p_3)$ & Num. 3 & $0$ \\
Num. 1,3 & $p_1p_2^2p_3(1-p_1p_3)$ & Num. 1,3 & $0$ \\
$q$ Den. & $1-p_1p_2p_3$ & $q$ Den. & $0$\\
\multicolumn{2}{c|}{} & \multicolumn{2}{c}{}\\
\hline
Case 5 & $F_1\sim \mathcal{U}([0,2])$, $F_2\sim\mathcal{U}([0,1])$ & Case 6 & $F_1\sim \mathcal{U}([0,2])$, $F_2\sim\mathcal{U}([0,2])$\\
\hline
\multicolumn{2}{c|}{} & \multicolumn{2}{c}{}\\
Prob.  & $q_1q_2$ & Prob. & $q_2^2$\\
Num. 1  & $p_1(1-p_1)p_2^2p_3^2$ & Num. 1 & $p_1(1-p_1)(p_2-(1-p_2)(1-p_3))^2$\\
Num. 2 & $p_1^2p_2(1-p_2)p_3^2$ & Num. 2 & $p_2(1-p_2)(p_1(1-p_3)-(1-p_1))^2$ \\
Num. 3 & $p_1^2p_2^2p_3(1-p_3)$ & Num. 3 & $p_1^2(1-p_2)^2p_3(1-p_3)$ \\
Num. 1,3 & $p_1p_2^2p_3(1-p_1p_3)$ & Num. 1,3 & $\Var(\ind_{X_1=1}(p_2-(1-p_2)\ind_{X_3=0}))$\\
$q$ Den. & $1-p_1p_2p_3$ & $q$ Den. & $(1-p_1)p_2+p_1(1-p_2)(1-p_3)$\\
\multicolumn{2}{c|}{} & \multicolumn{2}{c}{}\\
\hline
Case 7 & $F_1\sim \mathcal{U}([0,2])$, $F_2\sim\mathcal{U}([0,3])$ & Case 8 & $F_1\sim \mathcal{U}([0,2])$, $F_2\sim\mathcal{U}([0,4])$\\
\hline
\multicolumn{2}{c|}{} & \multicolumn{2}{c}{}\\
Prob.  & $q_2q_3$ & Prob. & $q_2q_4$\\
Num. 1  & $p_1(1-p_1)p_2^2p_3^2$ & Num. 1 & $0$\\
Num. 2 & $p_1^2p_2(1-p_2)p_3^2$ & Num. 2 & $0$ \\
Num. 3 & $p_1^2p_2^2p_3(1-p_3)$ & Num. 3 & $0$ \\
Num. 1,3 & $p_1p_2^2p_3(1-p_1p_3)$ & Num. 1,3 & $0$ \\
$q$ Den. & $1-p_1p_2p_3$ & $q$ Den. & $0$\\
\multicolumn{2}{c|}{} & \multicolumn{2}{c}{}\\
\hline
Case 9 & $F_1\sim \mathcal{U}([0,3])$, $F_2\sim\mathcal{U}([0,1])$ & Case 10 & $F_1\sim \mathcal{U}([0,3])$, $F_2\sim\mathcal{U}([0,2])$\\
\hline
\multicolumn{2}{c|}{} & \multicolumn{2}{c}{}\\
Prob.  & $q_1q_3$ & Prob. & $q_2q_3$\\
Num. 1  & $0$ & Num. 1 & $p_1(1-p_1)(1-p_2)^2$\\
Num. 2 & $0$ & Num. 2 & $(1-p_1)^2p_2(1-p_2)$ \\
Num. 3 & $0$ & Num. 3 & $0$ \\
Num. 1,3 & $0$ & Num. 1,3 & $p_1(1-p_1)(1-p_2)^2$ \\
$q$ Den. & $0$ & $q$ Den. & $(1-p_1)p_2+p_1$\\
\multicolumn{2}{c|}{} & \multicolumn{2}{c}{}\\
\hline
Case 11 & $F_1\sim \mathcal{U}([0,3])$, $F_2\sim\mathcal{U}([0,3])$ & Case 12 & $F_1\sim \mathcal{U}([0,3])$, $F_2\sim\mathcal{U}([0,4])$\\
\hline
\multicolumn{2}{c|}{} & \multicolumn{2}{c}{}\\
Prob.  & $q_3^2$ & Prob. & $q_3q_4$\\
Num. 1  & $p_1(1-p_1)(p_2(1-p_3)+(1-p_2)p_3)^2$ & Num. 1 & $p_1(1-p_1)(1-p_2)^2$\\
Num. 2 & $p_1^2p_2(1-p_2)(2p_3-1)^2$ & Num. 2 & $(1-p_1)^2p_2(1-p_2)$ \\
Num. 3 & $p_1^2(2p_2-1)^2p_3(1-p_3)$ & Num. 3 & $0$ \\
Num. 1,3 & $\Var(\ind_{X_1=1}(p_2+(1-2p_2)\ind_{X_3=1})$ & Num. 1,3 & $p_1(1-p_1)(1-p_2)^2$ \\
$q$ Den. & $p_1(p_2(1-p_3)+(1-p_2)p_3)$ & $q$ Den. & $(1-p_1)p_2+p_1$\\
\multicolumn{2}{c|}{} & \multicolumn{2}{c}{}\\
\hline
Case 13 & $F_1\sim \mathcal{U}([0,4])$, $F_2\sim\mathcal{U}([0,1])$ & Case 14 & $F_1\sim \mathcal{U}([0,4])$, $F_2\sim\mathcal{U}([0,2])$\\
\hline
\multicolumn{2}{c|}{} & \multicolumn{2}{c}{}\\
Prob.  & $q_1q_4$ & Prob. & $q_2q_4$\\
Num. 1  & $0$ & Num. 1 & $p_1(1-p_1)(1-p_2)^2$\\
Num. 2 & $0$ & Num. 2 & $(1-p_1)^2p_2(1-p_2)$ \\
Num. 3 & $0$ & Num. 3 & $0$ \\
Num. 1,3 & $0$ & Num. 1,3 & $p_1(1-p_1)(1-p_2)^2$ \\
$q$ Den. & $0$ & $q$ Den. & $(1-p_1)p_2+p_1$\\
\multicolumn{2}{c|}{} & \multicolumn{2}{c}{}\\
\hline
Case 15 & $F_1\sim \mathcal{U}([0,4])$, $F_2\sim\mathcal{U}([0,3])$ & Case 16 & $F_1\sim \mathcal{U}([0,4])$, $F_2\sim\mathcal{U}([0,4])$\\
\hline
\multicolumn{2}{c|}{} & \multicolumn{2}{c}{}\\
Prob.  & $q_3q_4$ & Prob. & $q_4^2$\\
Num. 1  & $p_1(1-p_1)(p_2+(1-p_2)p_3)^2$ & Num. 1 & $p_1(1-p_1)p_2^2p_3^2$\\
Num. 2 & $p_1^2p_2(1-p_2)(1-p_3)^2$ & Num. 2 & $p_1^2p_2(1-p_2)p_3^2$ \\
Num. 3 & $p_1^2(1-p_2)^2p_3(1-p_3)$ & Num. 3 & $p_1^2p_2^2p_3(1-p_3)$ \\
Num. 1,3 & $\Var(\ind_{X_1=1}(p_2+(1-p_2)\ind_{X_3=1})$ & Num. 1,3 & $p_1p_2^2p_3(1-p_1p_3)$ \\
$q$ Den. & $p_1(p_2+(1-p_2)p_3)$ & $q$ Den. & $p_1p_2p_3$\\
\multicolumn{2}{c|}{} & \multicolumn{2}{c}{}\\
\hline
\end{tabular}
\end{footnotesize}
\end{center}
\end{table}

In Figure \ref{fig:toy1_interaction_F}, we have represented the indices 
$S^1(\mathbb{F})$, $S^2(\mathbb{F})$,  $S^3(\mathbb{F})$, and  $S^{13}(\mathbb{F})$ given by \eqref{eq:SobolW2} with respect to the values of $p_1$ and $p_2$ varying from 0 to 1 for a fixed value of $p_3$. We have considered three different values of $p_3$: $p_3=0.01$ (first row), $0.5$, (second row) and $0.99$ (third row). Analogously, the same kind of illustration for the indices $S^1_{2,W_2}$, $S^2_{2,W_2}$,  $S^3_{2,W_2}$, and  $S^{13}_{2,W_2}$ given by \eqref{eq:SobolW} is provided in Figure \ref{fig:toy1_interaction_W} . In addition, the regions of predominance of each index $S^{\textbf u} (\mathbb{F})$ are plotted in Figure \ref{fig:toy1_zone_interaction_F}. The values of $p_1$ and $p_2$ still vary from 0 to 1 and the fixed values of $p_3$ considered are: $p_3=0.01$ (first row), $0.5$, (second row) and $0.99$ (third row). Finally, the same kind of illustration for the indices $S^{\textbf u}_{2,W_2}$ is given in Figure \ref{fig:toy1_zone_interaction_W}.
\begin{figure}[h!]
\centering 
\includegraphics[width=15.8cm]{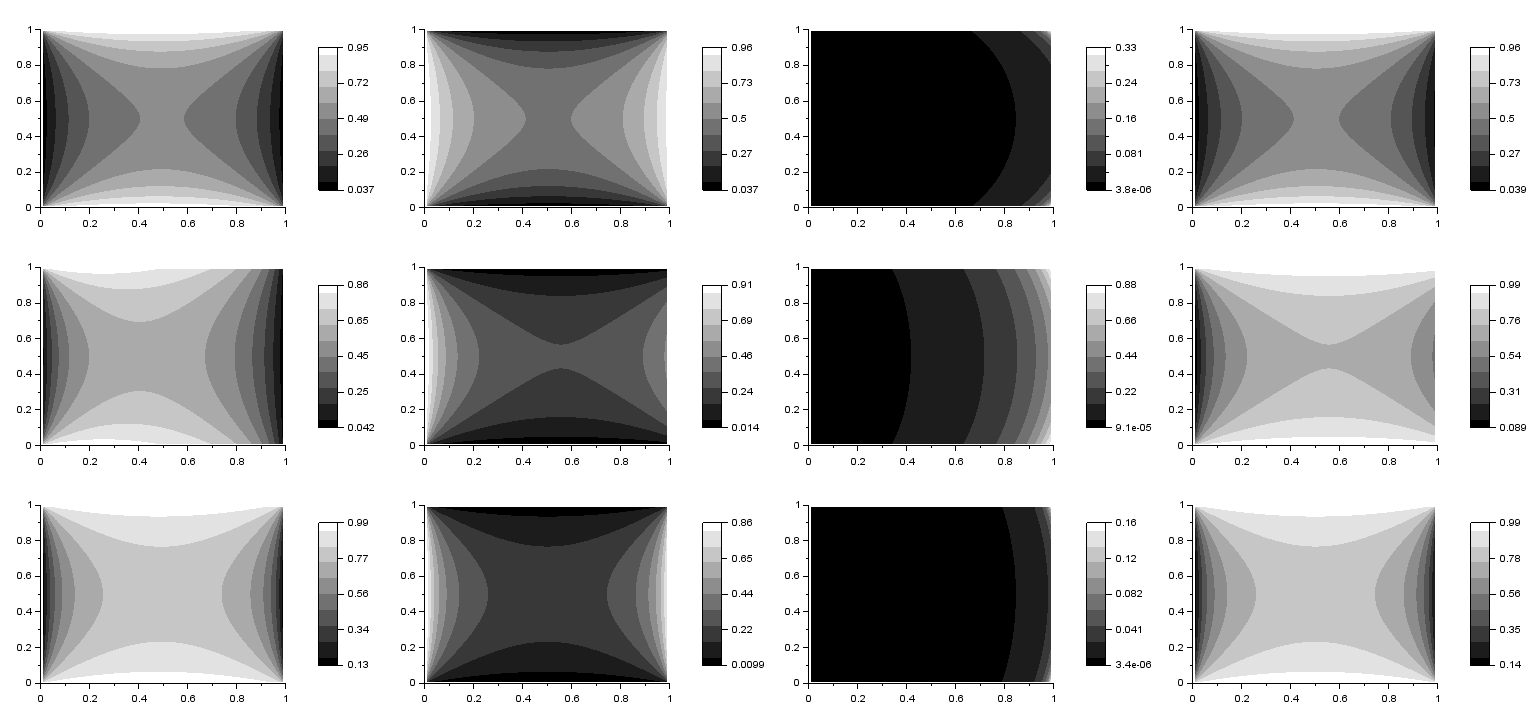}
\caption{Model \eqref{def:toy_model_1_interaction}. Values of the indices $S^1(\mathbb{F})$, $S^2(\mathbb{F})$,  $S^3(\mathbb{F})$, and  $S^{13}(\mathbb{F})$ given by \eqref{eq:SobolW2} (from left to right) with respect to the values of $p_1$ and $p_2$ (varying from 0 to 1). In the first row (resp. second and third), $p_3$ is fixed to $p_3=0.01$ (resp. $0.5$ and $0.99$).}
\label{fig:toy1_interaction_F}
\end{figure}
\begin{figure}[h!]
\centering 
\includegraphics[width=16cm]{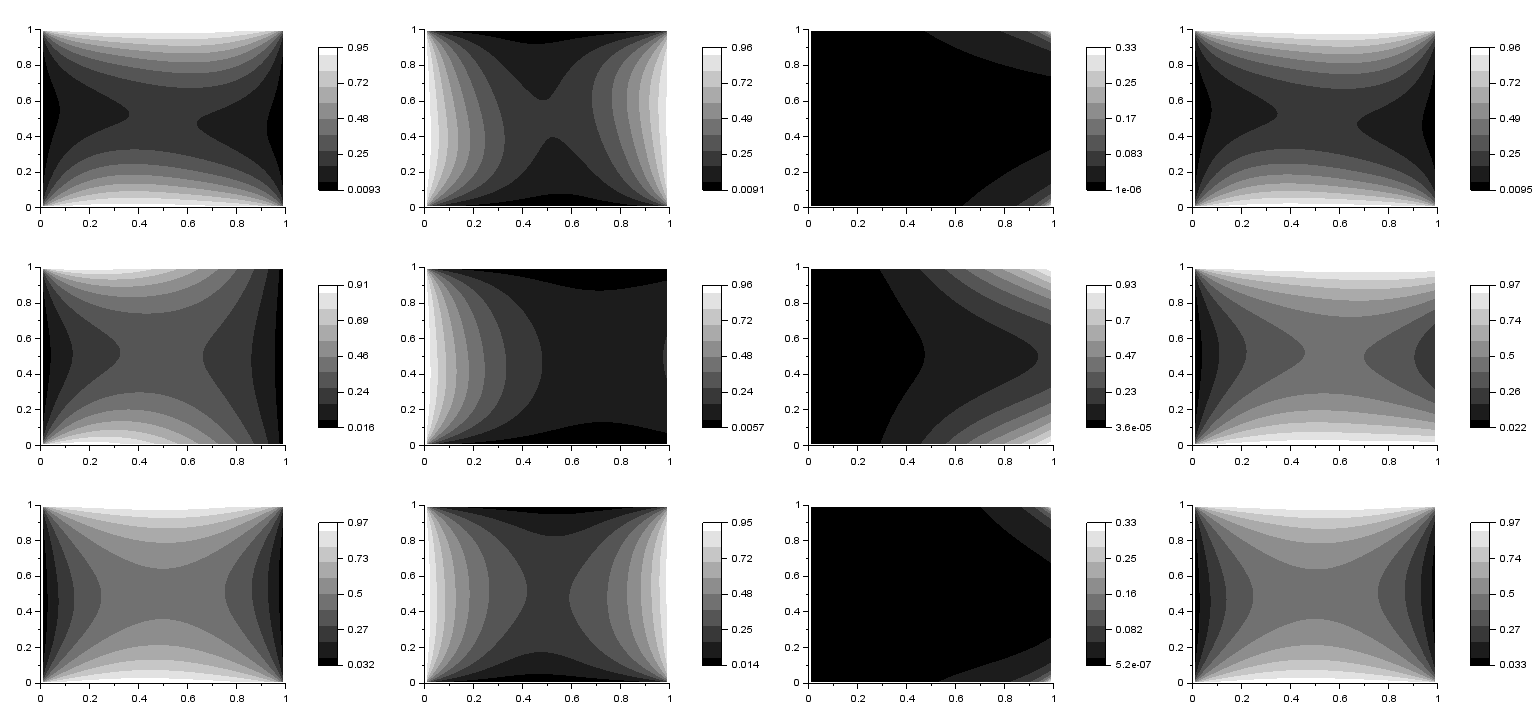}
\caption{Model \eqref{def:toy_model_1_interaction}. Values of the indices $S^1_{2,W_2}$, $S^2_{2,W_2}$,  $S^3_{2,W_2}$, and  $S^{13}_{2,W_2}$ given by \eqref{eq:SobolW} (from left to right) with respect to the values of $p_1$ and $p_2$ (varying from 0 to 1). In the first row (resp. second and third), $p_3$ is fixed to $p_3=0.01$ (resp. $0.5$ and $0.99$).}
\label{fig:toy1_interaction_W}
\end{figure}
\begin{figure}[h!]
\centering 
\includegraphics[width=16cm]{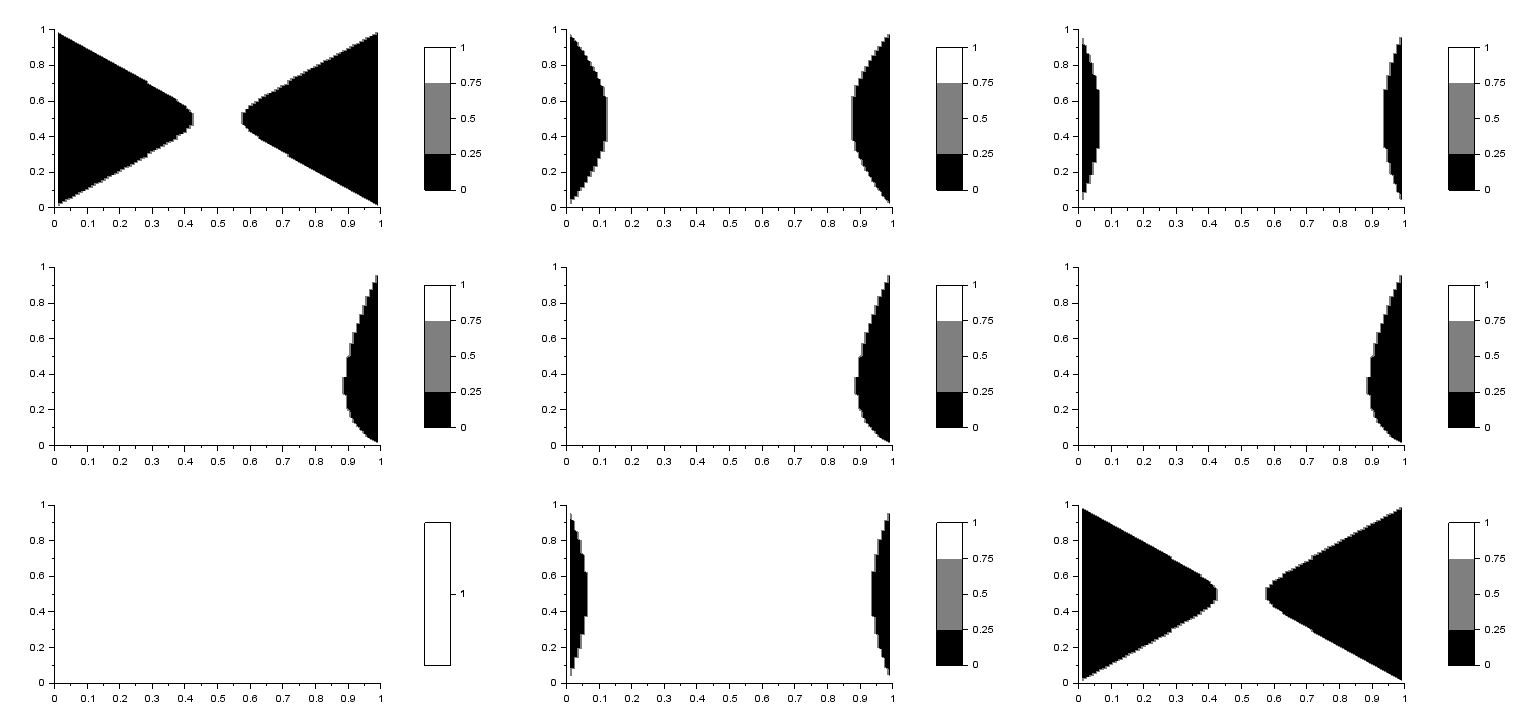}
\caption{Model \eqref{def:toy_model_1_interaction}. In the first row of the figure, regions where $S^1(\mathbb{F})\geqslant   S^2(\mathbb{F})$ (black), $S^1(\mathbb{F})\leqslant   S^2(\mathbb{F})$ (white), and $S^1(\mathbb{F})=   S^2(\mathbb{F})$ (gray) with respect to $p_1$ and $p_2$ varying from 0 to 1 and, from left to right, $p_3=0.01$, $0.5$, and $0.99$. Analogously, the second (resp. last) row considers the regions with $S^1(\mathbb{F})$ and  $S^3(\mathbb{F})$ (resp. $S^2(\mathbb{F})$ and  $S^3(\mathbb{F})$) with respect to $p_1$ and $p_3$ (resp. $p_2$ and $p_3$) varying from 0 to 1 and, from left to right, $p_2=0.01$, $0.5$, and $0.99$ (resp. $p_1=0.01$, $0.5$, and $0.99$).}
\label{fig:toy1_zone_interaction_F}
\end{figure}
\begin{figure}[h!]
\centering 
\includegraphics[width=16cm]{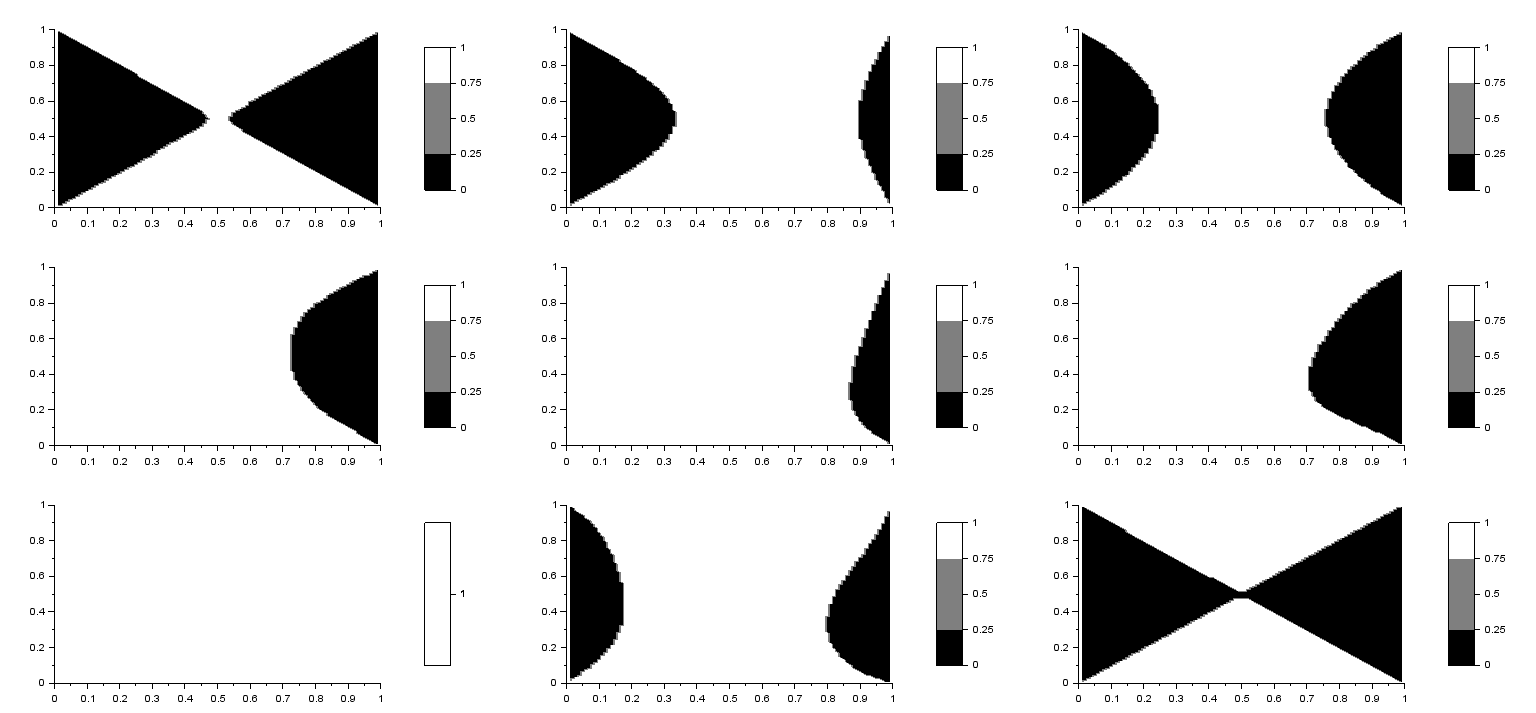}
\caption{Model \eqref{def:toy_model_1_interaction}. In the first row of the figure, regions where $S^1_{2,W_2}\geqslant   S^2_{2,W_2}$ (black), $S^1_{2,W_2}\leqslant   S^2_{2,W_2}$ (white), and $S^1_{2,W_2}=   S^2_{2,W_2}$ (gray) with respect to $p_1$ and $p_2$ varying from 0 to 1 and, from left to right, $p_3=0.01$, $0.5$, and $0.99$. Analogously, the second (resp. last) row considers the regions with $S^1_{2,W_2}$ and  $S^3_{2,W_2}$ (resp. $S^2_{2,W_2}$ and  $S^3_{2,W_2}$) with respect to $p_1$ and $p_3$ (resp. $p_2$ and $p_3$) varying from 0 to 1 and, from left to right, $p_2=0.01$, $0.5$, and $0.99$ (resp. $p_1=0.01$, $0.5$, and $0.99$).}
\label{fig:toy1_zone_interaction_W}
\end{figure}

\medskip

In order to compare the estimation accuracy of the Pick-Freeze method and the rank-based method at a fixed size, we assume that only $N=450$ calls of the computer code  are allowed to estimate the indices $S^{\textbf u}(\mathbb{F})$ and $S^{\textbf{u}}_{2,W_2}$ for $\textbf{u}=\{1\}$, $\{2\}$, and $\{3\}$. We only focus on the first order indices since, as explained previously, the rank-based procedure has not been developed yet for higher order indices. 
We repeat the estimation procedure 500 times. The boxplots of the mean square errors for the estimation of the Fr\'echet indices $S^{\textbf u}(\mathbb{F})$ and the Wasserstein indices $S^{\textbf{u}}_{2,W_2}$ have been plotted in Figure \ref{fig:boxplot_mse_both}. We observe that, for a fixed sample size $N=450$ (corresponding to a Pick-Freeze sample size $N=64$), the rank-based estimation procedure performs much better than the Pick-Freeze method with significantly lower mean errors.

%
%

\begin{figure}
\centering
\begin{tabular}{cc}
\includegraphics[scale=.5]{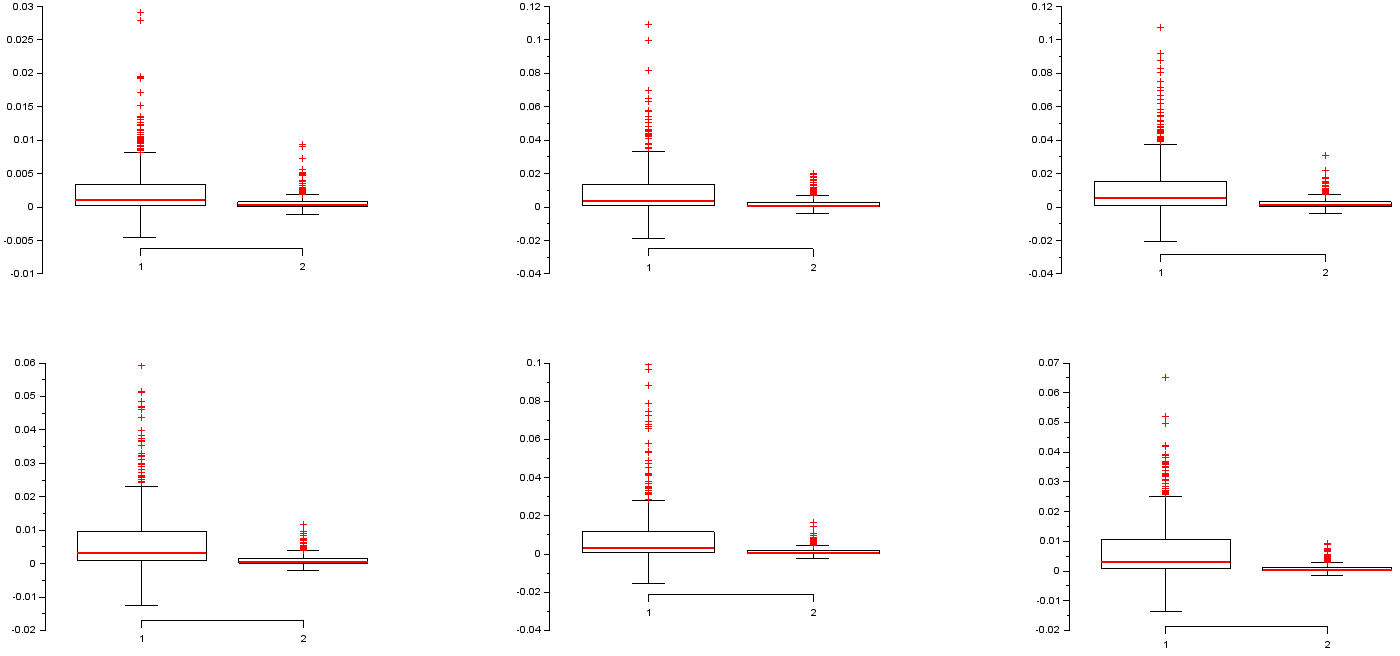}
\end{tabular}
\caption{Model \eqref{def:toy_model_1_interaction} with $p_1=1/3$, $p_2=2/3$, and $p_3=3/4$. Boxplots of the mean square errors of the estimation of the Fr\'echet indices $S^{\textbf u}(\mathbb{F})$ (top row) and the Wasserstein indices $S^{\textbf{u}}_{2,W_2}$ (bottom row) with a fixed sample size and 500 replications. The indices with respect to $\textbf u=\{1\}$, $\{2\}$, and $\{3\}$ are displayed from left to right. The results of the Pick-Freeze estimation procedure with $N=64$ are provided in the left side of each graphic. The results of the rank-based methodology with $N=450$ are provided in the right side of each graphic.  }
\label{fig:boxplot_mse_both}
\end{figure}

\end{ex}

%

\section{Sensitivity analysis for stochastic computer codes}
\label{sec:codesto}

This section deals with stochastic computer codes in the sense that two evaluations of the code for the same input lead to different outputs.

\subsection{State of the art}\label{ssec:art_code_sto}

A first natural way to handle stochastic computer codes is definitely to consider the expectation of the output code. Indeed, as mentioned in \cite{BILGLR16}, previous works dealing with
stochastic simulators together with robust design or optimization and sensitivity analysis consist mainly in approximating the mean and variance of the stochastic output \cite{dellino2015uncertainty,bursztyn2006screening,kleijnen2015design,
ankenman2008stochastic} and then performing a global sensitivity analysis on the expectation of the output code \cite{marrel2012global}. 

As pointed out by \cite{IKL16}, another approach is to consider that the stochastic code is of the form $f(X,D)$ where the random element $X$ contains the classical input variables and the variable $D$ is  an extra unobserved random input.  

Such an idea was exploited in \cite{janon2012asymptotic} to compare the estimation of the Sobol indices in an ``exact'' model to the estimation of the Sobol indices in an associated metamodel.

Analogously, the author of \cite{mazo2019optimal} assumes the existence of an extra random variable $D$ which is not chosen by the practitioner but rather generated at each computation of the output $Y$ independently of $X$. In this framework, he builds two different indices.   
The first index is obtained by substituting $f(X,D)$ for $f(X)$ in the classical definition of the first order Sobol index $S^i=\Var(\E[f(X)|X_i])/\Var(f(X))$. In this case, $D$ is considered as another input, even though it is not observable. The second index is obtained by substituting $\E[f(X,D)|X]$ for $f(X)$ in the Sobol index. The noise is then smoothed out. 

Similarly, the authors of \cite{hart2017efficient} traduces the randomness of the computer code using such an extra random variable $D$. In practice, their algorithm returns $m$ realizations of the first order Sobol indices $S^i$ for $i=1,\ldots,p$, denoted by $\hat S^i_j(d_j)$ for $j=1,\ldots,m$ and $i=1,\ldots,p$. Then, for any $i=1,\ldots,p$,  they approximate the statistical properties of $S^i$ by considering the sample $r$-th moments given by
\begin{align*}
\hat\mu^i_r= \frac 1m \sum_{j=1}^m (\hat S^i_j(d_j))^r
\end{align*}
noticing that
\[
\E_D[\hat\mu^i_r]=\E[(\hat S^i)^r] \quad \text{and} \quad  \Var_D(\hat\mu^i_r)=\frac 1M \Var_D((\hat S^i)^r).
\]

\subsection{The space \texorpdfstring{$\mathcal{W}_q$}{f} as an ideal version of stochastic computer codes}\label{ssec:ideal}


When dealing with stochastic computer codes, the practitioner is generally  interested in the distribution $\mu_x$ of the output for a given $x$. 
As previously seen, one can translate this type of codes in terms of a deterministic code by considering an extra input which is not chosen by the practitioner itself but which is a latent variable generated randomly by the computer code and independently of the classical input. As usual in the framework of sensitivity analysis, one considers the input as a random variable.  All the random variables (the one chosen by the practitioner and the one generated by the computer code) are built on the same probability space, leading to the function $f_s$:
\begin{eqnarray}\label{def:code_stoch}
f_s  : & E \times  \mathcal D & \to  \R\\
& (x,D) & \mapsto  f_s(x,D)\nonumber,
\end{eqnarray}
where $D$ is the extra random variable lying in $\mathcal D$. We naturally denote the output random variable $f_s(x,\cdot{})$ by $f_s(x)$.

Hence, one may define another (deterministic) computer code associated with $f_s$ whose output is the probability measure:
\begin{eqnarray}\label{def:code}
f  : & E  & \to  \mathcal W_q(E)\\
& x & \mapsto  \mu_x.\nonumber
\end{eqnarray}

The framework of \eqref{def:code} is exactly the one of Section \ref{ssec:GSAWS_balls} and has already been handled. Obviously, in practice, one does not assess the output of the code $f$ but 
one can only obtain an empirical approximation of the measure $\mu_x$ given by $n$ evaluations of $f_s$ at $x$, namely, 
\[
\mu_{x,n}= \frac 1n \sum_{k=1}^n \delta_{f_s(x,D_k)}.
\]
Further, \eqref{def:code} can be seen as an ideal version of \eqref{def:code_stoch}. 
Concretely, for a single random input $X\in E=E_1\times\dots \times E_p$, we will evaluate $n$ times the code $f_s$ defined by \eqref{def:code_stoch} (so that the code will generate independently $n$ hidden variables $D_1$, \dots, $D_n$) and one may observe
\[
f_s(X,D_1),\dots , f_s(X,D_n)
\]
leading to the measure
$\mu_{X,n}=  \sum_{k=1}^n \delta_{f_s(X,D_k)}/n$ approximating the distribution of $f_s(X)$. We emphasize on the fact that the random variables $D_1,\dots, D_n$ are not observed.

\subsection{Sensitivity analysis}\label{ssec:SA_code_sto}

Let us now present the methodology we adopt.
In order to study the sensitivity of the distribution $\mu_x$, one can use the framework introduced in Section \ref{ssec:GSAWS_balls} and the index $S_{2,W_q}^{\textbf u}$ given by \eqref{eq:SobolW}.

In an ideal scenario which corresponds to the framework of \eqref{def:code}, one may asses to the probability measure $\mu_x$ for any $x$. Then following the estimation procedure of Section \ref{ssec:GSAWS_est}, one gets an estimation of the sensitivity index $S_{2,W_q}^{\textbf u}$ with good asymptotic properties \cite[Theorem 2.3]{GKLM19}.

In the more realistic framework presented above in \eqref{def:code_stoch}, we only have access to the approximation $\mu_{x,n}$ of $\mu_x$ rendering more complex the  estimation procedure and the study of the asymptotic properties. In this case, the general design of experiments is the following:
\begin{eqnarray*}
&(X_1,D_{1,1},\ldots,D_{1,n}) & \to  ~~~ f_s(X_1,D_{1,1}),\dots , f_s(X_1,D_{1,n}),\\
&(X_1^{\textbf u},D_{1,1}',\dots,D_{1,n}') & \to ~~~ f_s(X_1^{\textbf u},D_{1,1}'),\dots , f_s(X_1^{\textbf u},D_{1,n}'),\\
&& \vdots   \\
&(X_N,D_{N,1},\dots,D_{N,n}) & \to ~~~ f_s(X_N,D_{N,1}),\dots , f_s(X_N,D_{N,n}),\\
&(X_N^{\textbf u},D_{N,1}',\dots,D_{N,n}') & \to ~~~ f_s(X_N^{\textbf u},D_{N,1}'),\dots , f_s(X_N^{\textbf u},D_{N,n}'),
\end{eqnarray*}
where $2\times N\times n$ is the total number of evaluations of the stochastic computer code \eqref{def:code_stoch}. Then we construct the approximations of $\mu_{j}$ (standing for $\mu_{X_j}$) for any $j=1,\dots,N$ given by 
\begin{align}\label{def:measures_approx}
\mu_{j,n} =\frac1n \sum_{k=1}^n \delta_{f_s(X_j,D_{j,k})}.
\end{align}
From there, one may use one of the three estimation procedures presented in Section \ref{ssec:GMSindex}. 

\begin{itemize}
\item \underline{First method - Pick-Freeze}. It suffices to plug the empirical version $\mu_n$ of each measure $\mu$ under concern in \eqref{eq:SobolW_PF}.

\item \underline{Second method - U-statistics}. For $l=1,\dots,4$, let
\begin{align}\label{def:estU_n}
U_{l,N,n}&= \begin{pmatrix}N\\m(l)\end{pmatrix}^{-1}\sum_{1\leq i_1<\dots<i_{m(l)}\leq N}\Phi_l^s\left(
\bmu_{i_1,n},\dots,\bmu_{i_{m(l)},n}
\right)
\end{align}
where as previously seen $\Phi^s_{\cdot{}}$ is the symmetrized version of $\Phi_{\cdot{}}$ defined in \eqref{def:Phi} and $\bmu=(\mu,\mu^{\textbf u})$. Then we estimate $S_{2,W_q}^{\textbf u}$ by 
\begin{equation}\label{def:est_n}
\widehat S_{2,W_q,\text{Ustat},n}^{\textbf u}= \frac{U_{1,N,n}-U_{2,N,n}}{U_{3,N,n}-U_{4,N,n}}.
\end{equation}

\item \underline{Third method - Rank-based}. The rank-based estimation procedure may also easily extend to this context by using the empirical version $\mu_n$ of each measure $\mu$ under concern instead of the true one $\mu$, as explained into more details in the numerical study developed in Section \ref{ssec:art_code_sto}.
\end{itemize}

Actually, these estimators are easy to compute since for two discrete measures supported on a same number of points and given by
\[
\nu_1=\frac 1n \sum_{k=1}^n \delta_{x_k},\; \nu_2=\frac 1n \sum_{k=1}^n \delta_{y_k}, 
\]
the Wasserstein distance between $\nu_1$ and $\nu_2$ simply writes
\begin{align}\label{eq:mes_emp}
W_q^q(\nu_1,\nu_2)=\frac 1n \sum_{k=1}^n (x_{(k)}-y_{(k)})^q,
\end{align}
where $x_{(k)}$ is the $k$-th order statistics of $x$.

\subsection{Central limit theorem for the estimator based on U-statistics}\label{ssec:CLT_code_sto}

\begin{prop}\label{prop:density}
Consider three i.i.d.\ copies $X_1$, $X_2$ and $X_3$ of a random variable\ $X$.
Let $\delta(N)$ be a sequence tending to 0 as $N$ goes to infinity and such that
\[
\P\left(\abs{W_q(\mu_{X_1},\mu_{X_3})- W_q(\mu_{X_1},\mu_{X_2})}\leqslant \delta(N)\right)=o\left(\frac{1}{\sqrt N}\right).
\]
Let $n$ such that $\E[W_q(\mu_{X},\mu_{X,n})]=o(\delta(N)/\sqrt N)$. 
Under the assumptions of \cite[Theorem 2.3]{GKLM19}, we get, for any $\textbf u \subset \{1,\cdots,p\}$,
\begin{align}\label{eq:clt_density}
\sqrt{N}\left(\widehat S_{2,W_q,\text{Ustat},n}^{\textbf u}- S_{2,W_q}^{\textbf u}\right)\xrightarrow[n\to+\infty]{\mathcal{L}}\mathcal{N}(0,\sigma^2)
\end{align}
where the asymptotic variance $\sigma^2$ is given by Equation (13) in the proof of Theorem 2.3 in \cite{GKLM19}.
\end{prop}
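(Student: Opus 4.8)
The plan is to view $\widehat S_{2,W_q,\text{Ustat},n}^{\textbf u}$ as a small perturbation of the \emph{ideal} estimator $\widehat S_{2,W_q,\text{Ustat}}^{\textbf u}$ built from the true measures $\mu_j$ rather than from their empirical approximations $\mu_{j,n}$. By \cite[Theorem 2.3]{GKLM19}, the ideal estimator already satisfies a $\sqrt N$-CLT towards $\mathcal N(0,\sigma^2)$. I would therefore write the decomposition
\[
\sqrt N\bigl(\widehat S_{2,W_q,\text{Ustat},n}^{\textbf u}-S_{2,W_q}^{\textbf u}\bigr)=\sqrt N\bigl(\widehat S_{2,W_q,\text{Ustat}}^{\textbf u}-S_{2,W_q}^{\textbf u}\bigr)+\sqrt N\bigl(\widehat S_{2,W_q,\text{Ustat},n}^{\textbf u}-\widehat S_{2,W_q,\text{Ustat}}^{\textbf u}\bigr)
\]
and, by Slutsky's lemma, it would suffice to show that the second term tends to $0$ in probability. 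Since both estimators are ratios whose denominators $U_{3,N,n}-U_{4,N,n}$ and $U_{3,N}-U_{4,N}$ converge in probability to the same positive limit $I(\Phi_3)-I(\Phi_4)$, a continuous-mapping/delta-method argument for the ratio reduces the task to proving that $\sqrt N\,(U_{l,N,n}-U_{l,N})$ tends to $0$ in probability for each $l=1,\dots,4$.

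To control a single $U_{l,N,n}-U_{l,N}$, I would use that a $U$-statistic is an average of exchangeable terms, so that
\[
\E\bigl|U_{l,N,n}-U_{l,N}\bigr|\leqslant \E\bigl|\Phi_l^s(\bmu_{1,n},\dots,\bmu_{m(l),n})-\Phi_l^s(\bmu_{1},\dots,\bmu_{m(l)})\bigr|,
\]
reducing everything to a single-term expectation that I want to bound by $o(1/\sqrt N)$. Each $\Phi_l$ is a product of indicators of the form $\ind_{W_q(\mu_{i_1},\mu_{i_3})\leqslant W_q(\mu_{i_1},\mu_{i_2})}$, all bounded by $1$; telescoping the difference of two products into a sum of single-factor differences then leaves me with bounding
\[
\E\bigl|\ind_{W_q(\mu_{1,n},\mu_{3,n})\leqslant W_q(\mu_{1,n},\mu_{2,n})}-\ind_{W_q(\mu_{1},\mu_{3})\leqslant W_q(\mu_{1},\mu_{2})}\bigr|,
\]
up to relabelling and to the obvious variants involving the Pick-Freeze copies $\mu^{\textbf u}$.

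The key step is the observation that this indicator difference is nonzero only when the true distance $W_q(\mu_1,\mu_3)$ sits near the threshold $W_q(\mu_1,\mu_2)$. Indeed, the triangle inequality for $W_q$ gives $|W_q(\mu_{1,n},\mu_{3,n})-W_q(\mu_1,\mu_3)|\leqslant W_q(\mu_{1,n},\mu_1)+W_q(\mu_{3,n},\mu_3)$, and likewise for the threshold, so a change of sign forces $\abs{W_q(\mu_1,\mu_3)-W_q(\mu_1,\mu_2)}$ to be no larger than a fixed combination of the approximation errors $W_q(\mu_{j,n},\mu_j)$. I would then split on the event $A_N=\{\max_j W_q(\mu_{j,n},\mu_j)\leqslant \delta(N)\}$: on $A_N$ a sign change requires $\abs{W_q(\mu_1,\mu_3)-W_q(\mu_1,\mu_2)}\leqslant 4\delta(N)$, whose probability is $o(1/\sqrt N)$ by the anti-concentration hypothesis on $\delta(N)$; on $A_N^c$, Markov's inequality together with $\E[W_q(\mu_X,\mu_{X,n})]=o(\delta(N)/\sqrt N)$ yields $\P(A_N^c)=o(1/\sqrt N)$. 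Adding the two contributions gives $\E|U_{l,N,n}-U_{l,N}|=o(1/\sqrt N)$, hence the required convergence, and Slutsky's lemma concludes.

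The main obstacle is precisely this indicator-crossing control: the estimator depends on the random measures only through discontinuous (indicator) functionals of Wasserstein distances, so no smoothness is available and one must quantify how rarely the empirical perturbation flips an indicator. This is exactly what the two hypotheses are engineered for — the anti-concentration condition prevents the true distances from piling up on the Wasserstein-ball boundary, while the moment condition on $\E[W_q(\mu_X,\mu_{X,n})]$ makes the approximation error negligible at the scale $\delta(N)/\sqrt N$ — and the delicate point is matching these two scales so that both the boundary-layer probability and $\P(A_N^c)$ are $o(1/\sqrt N)$ simultaneously.
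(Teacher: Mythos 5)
Your proof is correct and follows essentially the same route as the paper's: the same Slutsky decomposition, the same reduction of the ratio to showing $\sqrt{N}\,(U_{l,N,n}-U_{l,N})=o_{\P}(1)$ via exchangeability and a single indicator-flip term, and the same split into a near-boundary event (anti-concentration hypothesis) and a large-approximation-error event (Markov plus the moment hypothesis). The only discrepancy is that, as written, you invoke anti-concentration at scale $4\delta(N)$ while the hypothesis is stated at scale $\delta(N)$; this is fixed immediately by defining your event as $A_N=\{\max_j W_q(\mu_{j,n},\mu_j)\leqslant \delta(N)/4\}$, which places the constant on the Markov side exactly as the paper does.
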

%
%
In some particular frameworks, one may derive easily a suitable value of $\delta(N)$. Two examples are given in the following.

\begin{ex}
If the inverse of the random variable $W= \abs{W_q(\mu_{X_1},\mu_{X_3})- W_q(\mu_{X_1},\mu_{X_2})}$ has a finite expectation, then, by Markov inequality,
\begin{align*}
\P\left(W\leqslant \delta(N)\right) = \P\left(W^{-1}\geqslant \delta(N)^{-1}\right)\leqslant 
\frac{1}{\delta(N)} \E\left[\frac{1}{W}\right]
\end{align*}
and it suffices to choose $\delta(N)$ so that $\delta(N)^{-1}=o\left(N^{-1/2}\right)$ as $N$ goes to infinity.
\end{ex}

\begin{ex}[Uniform example]\label{ex:uniform_ex}
Assume that $X$ is uniformly distributed on $[0,1]$ and that $\mu_X$ is a Gaussian distribution centered at $X$ with unit variance. Then the Wasserstein distance $W_2(\mu_{X_1},\mu_{X_2})$ rewrites as $(X_1-X_2)^2$ so that the random variable $W=\abs{W_2(\mu_{X_1},\mu_{X_3})- W_2(\mu_{X_1},\mu_{X_2})}$ is given by
\[
\abs{(X_1-X_3)^2- (X_1-X_2)^2}=\abs{(X_3-X_2)(X_2+X_3-2X_1)}.
\]
Consequently, 
\begin{align*}
\P(W\leqslant \delta(N))&\leqslant \P(\abs{X_3-X_2}\leqslant \sqrt{\delta(N)})+\P(\abs{X_2+X_3-2X_1}\leqslant \sqrt{\delta(N)}).
\end{align*}
Notice that $\abs{X_3-X_2}$ is triangular distributed with parameter $a=0$, $b=1$, and $c=0$ leading to 
\[
\P(\abs{X_3-X_2}\leqslant\alpha)=\alpha(2-\alpha), \quad \text{for all $\alpha\in [0,1]$}.
\]
In addition, 
\begin{align*}
\P(\abs{X_2+X_3-2X_1}\leqslant \sqrt{\delta(N)})&\leqslant \P(\abs{\abs{X_2-X_1}-\abs{X_3-X_1}}\leqslant \sqrt{\delta(N)})\\
&=\int_0^1  \P(\abs{\abs{X_2-u}-\abs{X_3-u}}\leqslant \sqrt{\delta(N)})du.
\end{align*}
Now, $X_2-u$ and $X_3-u$ are two independent random variables uniformly distributed on $[-u,-u]$. Then (see Figure \ref{fig:dessin_jc}), one has 
\[
\P(\abs{\abs{X_2-u}-\abs{X_3-u}}\leqslant \alpha)\leqslant 4\alpha,
\]
whence
\[
\P(\abs{X_2+X_3-2X_1}\leqslant \sqrt{\delta(N)})\leqslant 4\sqrt{\delta(N)}.
\]
Thus it turns out that 
 $\P(W\leqslant \delta(N))=O\Bigl(\sqrt{\delta(N)}\Bigr)$. Consequently, a suitable choice for $\delta(N)$ is $\delta(N)=o(1/N)$.
 
 \begin{center}
  \begin{tikzpicture}[scale=9,
    font=\sffamily,
    level/.style={black,thick},
    transition/.style={black,->,>=stealth',shorten >=1pt},
    radiative/.style={transition,decorate,decoration={snake,amplitude=1.5}},
    nonradiative/.style={transition,dashed},
  ]

	\coordinate (S00) at (-0.25, -0.25); 
	\coordinate (S10) at (0.75, -0.25);
	\coordinate (S20) at (0.75,0.75);
	\coordinate (S30) at (-0.25, 0.75);

	\draw (S00)-- (S10);
    \draw (S10)-- (S20);
  	\draw (S20)-- (S30);
	\draw (S30)-- (S00);
	
	\draw (0.2,0)-- (0.45,-0.25);
    \draw (0,-0.2)-- (0.05,-0.25);
  	\draw (0.2,0)-- (0.75,0.55);
	\draw (0,0.2)-- (0.55,0.75);

	\draw (-0.2,0)-- (-0.25,0.05);
    \draw (0,0.2)-- (-0.25,0.45);
  	\draw (-0.2,0)-- (-0.25,-0.05);
	\draw (0,-0.2)-- (-0.05,-0.25);

    \fill[gray!20] (0.2,0) -- (0.75,0.55) -- (0.75,0.75) -- (0.55,0.75) -- (0,0.2) -- (-0.25,0.45) -- (-0.25,0.05) -- (-0.2,0) -- (-0.25,-0.05) -- (-0.25,-0.25) -- (-0.05, -0.25) -- (0,-0.2) -- (0.05,-0.25) -- (0.45,-0.25) -- (0.2,0);
 
 	\draw [dashed] (S00) -- (S20);
	\draw [dashed] (0.25,-0.25) -- (-0.25,0.25);

    \draw (0.75,0) node[above right]{$u$} ;
    \draw (0,0.75) node[above right]{$u$} ;
    \draw (-0.25,0) node[above left]{$u-1$} ;
    \draw (0,-0.25) node[below right]{$u-1$} ;
    \draw (0.22,0) node[above right]{$\alpha$} ;
    \draw (0,0.23) node[above right]{$\alpha$} ;
    \draw (0,-0.2) node[above right]{$-\alpha$} ;
    \draw (-0.2,0) node[above right]{$-\alpha$} ;

    \draw (0,0) node[below right]{$0$} ;

    \draw (0.75,0)  node{$\bullet$};
    \draw (0,0.75)  node{$\bullet$};
    \draw (-0.25,0)  node{$\bullet$};
    \draw (0,-0.25)  node{$\bullet$};
    
	\draw [->] (-0.35,0) -- (0.85,0);
	\draw [->] (0,-0.35) -- (0,0.85);

  \end{tikzpicture}
  
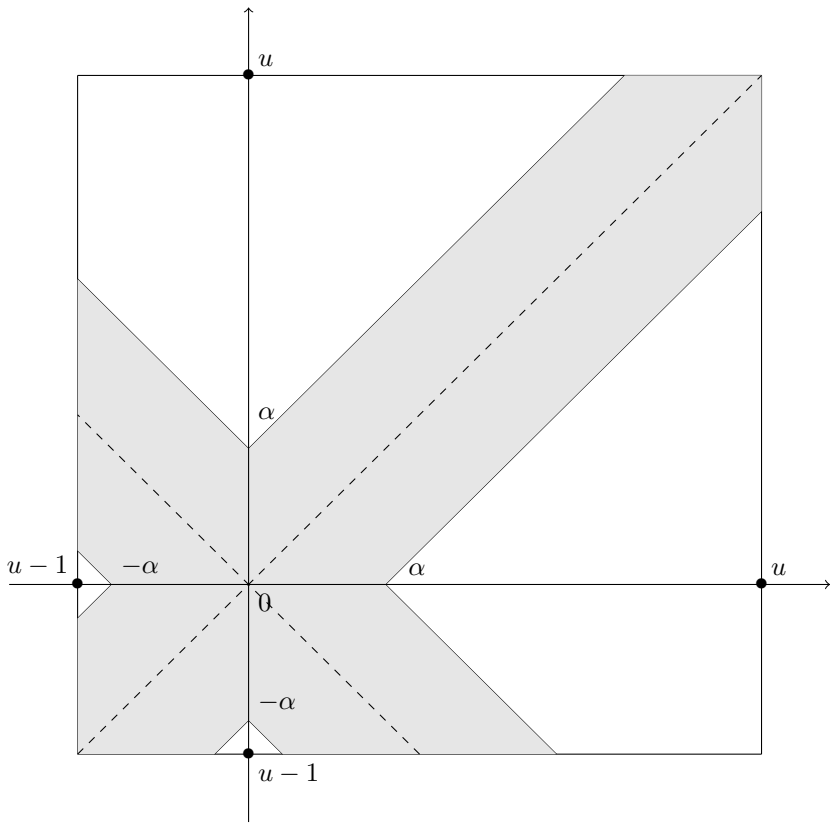
\captionof{figure}{Domain $\Gamma_{u,\alpha}=\{(x_1,x_2)\in [0,1];\; \abs{\abs{x_1-u}-\abs{x_2-u}}\leqslant \alpha\}$ (in grey).}\label{fig:dessin_jc}
\end{center}

\end{ex}


Analogously, one may derive suitable choices for $n$ in some particular cases. For instance, we refer the reader to \cite{BL18} to get upper bounds on $\E[W_q(\mu_X,\mu_{X,n})]$ for several values of $q\geqslant 1$ and several assumptions on the distribution on $\mu_X$: general, uniform, Gaussian, beta, log concave... Here are some results.
\begin{itemize}
\item In the general framework, the upper 
bound for $q\geqslant 1$ relies on the functional 
\[
J_q(\mu_X)= \int_{\R} \frac{\left(F_{\mu_X}(x)(1-F_{\mu_X}(x))\right)^{q/2}}{f_{\mu_X}(x)^{q-1)}}dx
\]
where $F_{\mu_X}$ is the c.d.f.\ associated to $\mu_X$ and $f_{\mu_X}$ its p.d.f. See Cf. \cite[Theorems 3.2, 5.1 and 5.3]{BL18}.
\item Assume that $\mu_X$ is uniformly distributed on $[0,1]$. Then by \cite[Theorems 4.7, 4.8 and 4.9]{BL18},
for any $n\geqslant 1$, 
\[
\E[W_2(\mu_X,\mu_{X,n})^2]\leqslant \frac{1}{6n},
\]
for any $q\geqslant 1$ and for any $n\geqslant 1$, 
\[
\E[W_q(\mu_X,\mu_{X,n})^q]^{1/q}\leqslant (Const) \sqrt{ \frac{q}{n}}.
\]
and  for any $n\geqslant 1$, 
\[
\E[W_{\infty}(\mu_X,\mu_{X,n})]\leqslant \frac{(Const)}{n}.
\]
E.g. $(Const)=\sqrt{\pi/2}$.
\item Assume that $\mu_X$ is a log-concave distribution with standard deviation $\sigma$. Then by \cite[Corollaries 6.10 and 6.12]{BL18}, 
for any $1\leqslant q<2$ and for any $n\geqslant 1$, 
\[
\E[W_q(\mu_X,\mu_{X,n})^q]\leqslant \frac{(Const)}{2-q} \left( \frac{\sigma}{\sqrt{n}}\right)^q,
\]
for any $n\geqslant 1$, 
\[
\E[W_2(\mu_X,\mu_{X,n})^2]\leqslant \frac{(Const)\sigma^2\log n}{n},
\]
and for any $q>2$ and for any $n\geqslant 1$,
\[
\E[W_q(\mu_X,\mu_{X,n})^q]\leqslant \frac{C_q \sigma^q}{n},
\]
where $C_q$ depends on $q$, only.
Furthermore, if $\mu_X$ supported on $[a,b]$, then for any $n\geqslant 1$, 
\[
\E[W_2(\mu_X,\mu_{X,n})^2]\leqslant \frac{(Const)(b-a)^2}{n+1}.
\]
E.g. $(Const)=4/\ln 2$. Cf. \cite[Corollary 6.11]{BL18}.

\end{itemize}

\noindent
\textbf{Example \ref{ex:uniform_ex} - continued}. 
  We consider that $X$ is uniformly distributed on $[0,1]$ and $\mu_X$ is a Gaussian distribution centered at $X$ with unit variance. Then, by \cite[Corollary 6.14]{BL18}, we have
for any $n\geqslant 3$, 
\[
\E[W_2(\mu_X,\mu_{X,n})^2]\leqslant \frac{(Const)\log\log n}{n},
\]
and for any $q>2$ and for any $n\geqslant 3$, 
\[
\E[W_q(\mu_X,\mu_{X,n})^q]\leqslant \frac{C_q}{n(\log n)^{q/2}},
\]
where $C_q$ depends only on $q$. Since we have already chosen $\delta(N)=o(N^{-1})$, it remains to take $n$ so that $\log\log n/n=o(N^{-2})$ to fulfill the condition $\E[W_2(\mu_{X},\mu_{X,n})]=o(\delta(N)/\sqrt N)$.

\subsection{Numerical study}\label{ssec:num_study_sto}

\textbf{Example \ref{ex:toy_model} - continued}. Here, we consider again the code given by \eqref{def:toy_model_1_interaction}.
Having in mind the notation of Section \ref{ssec:ideal}, we consider 
the ideal version of the code:
\begin{eqnarray*}
f  : & E  &\to  \mathcal W_q(E)\\
& (X_1,X_2,X_3) &\mapsto  \mu_{(X_1,X_2,X_3)}\nonumber
\end{eqnarray*} 
where $\mu_{(X_1,X_2,X_3)}$ is the uniform distribution on $[0,1+X_1+X_2+X_1X_3]$, the c.d.f.\ of which is $\mathbb F$ given by \eqref{def:toy_model_1_interaction}
and its stochastic counterpart:
\begin{eqnarray}\label{def:code_stoch_ex}
f_s  : & E \times  D & \to  \R\\
& (X_1,X_2,X_3,D)  & \mapsto  f_s(X_1,X_2,X_3,D)\nonumber
\end{eqnarray}
where $f_s(X_1,X_2,X_3,D)$ is a realization of $\mu_{(X_1,X_2,X_3)}$.

Hence, we no longer assume that one may observe $N$ realizations of $\mathbb F$ associated to the $N$ initial realizations of $(X_1,X_2,X_3)$.  Instead, for any of the $N$ initial realizations of $(X_1,X_2,X_3)$, we assess  $n$ realizations of a uniform random variable on $[0,1+X_1+X_2+X_1X_3]$.

In order to compare the estimation accuracy of the Pick-Freeze method and the rank-based method at a fixed size, we assume that only $N=450$ calls of the computer code $f$ are allowed to estimate the indices $S^{\textbf u}(\mathbb{F})$ and $S^{\textbf{u}}_{2,W_2}$ for $\textbf{u}=\{1\}$, $\{2\}$, and $\{3\}$. We only focus on the first order indices since, as explained previously, the rank-based procedure has not been developed yet for higher order indices.  The empirical c.d.f.\ based on the empirical measures $\mu_{i,n}$ for $i=1,\ldots,n$ in \eqref{def:measures_approx} are constructed with $n=100$ evaluations.  
We repeat the estimation procedure 500 times. The boxplots of the mean square errors for the estimation of the Fr\'echet indices $S^{\textbf u}(\mathbb{F})$ and the Wasserstein indices $S^{\textbf{u}}_{2,W_2}$ have been plotted in Figure \ref{fig:boxplot_mse_both_sto}. We observe that, for a fixed sample size $N=450$ (corresponding to a Pick-Freeze sample size $N=64$), the rank-based estimation procedure performs much better than the Pick-Freeze method with significantly lower mean errors.

\begin{figure}
\centering
\begin{tabular}{c}
\includegraphics[scale=.52]{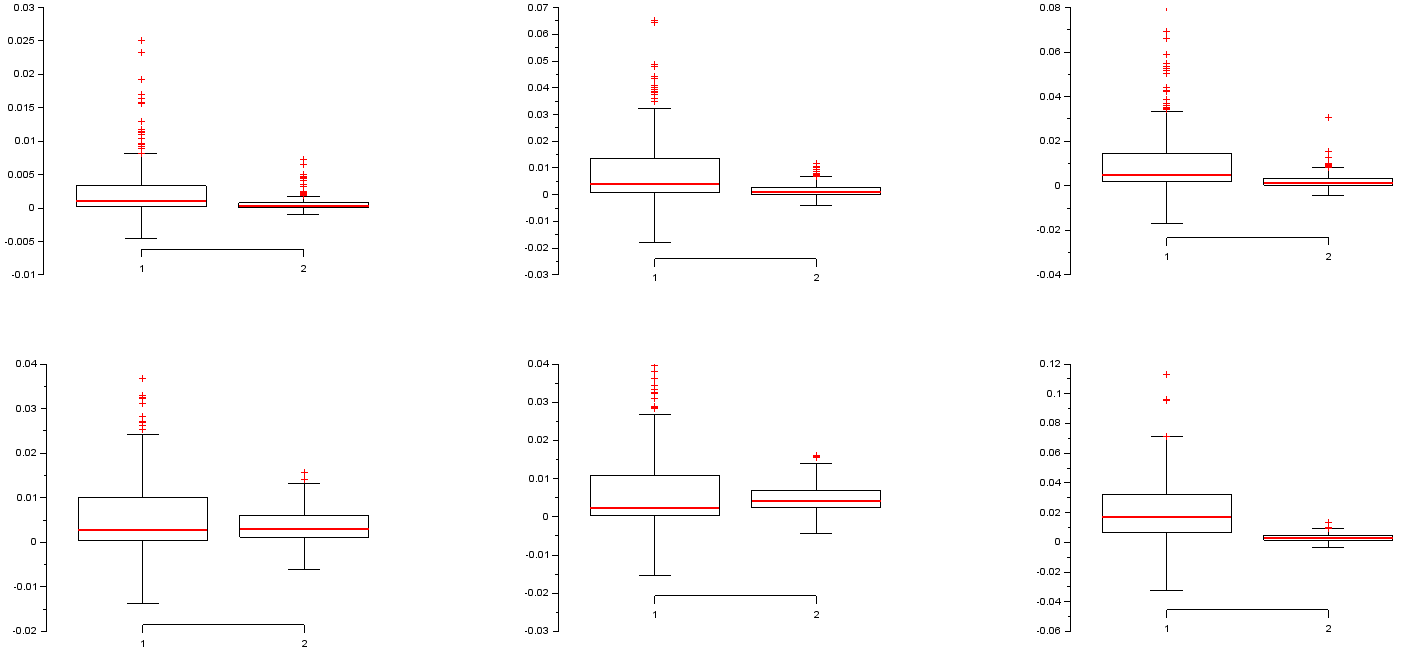}
\end{tabular}
\caption{Model \eqref{def:code_stoch_ex} with $p_1=1/3$, $p_2=2/3$, and $p_3=3/4$. Boxplot of the mean square errors of the estimation of the Fr\'echet indices $S^{\textbf u}(\mathbb{F})$ (top row) and the Wasserstein indices $S^{\textbf{u}}_{2,W_2}$ (bottom row) with a fixed sample size and 200 replications. The indices with respect to $\textbf u=\{1\}$, $\{2\}$, and $\{3\}$ are displayed from left to right. The results of the Pick-Freeze estimation procedure with $N=64$ are provided in the left side of each graphic. The results of the rank-based methodology with $N=450$ are provided in the right side of each graphic.  The approximation size $n$ is fixed at 500.}
\label{fig:boxplot_mse_both_sto}
\end{figure}

\medskip

Another numerical study in the particular setting of stochastic computer codes  and inspired by \cite{GH19marginal} will be considered in Section \ref{ssec:num_app_second_level}.

\section{Sensitivity analysis with respect to the law of the inputs}\label{sec:input}

\subsection{State of the art}\label{ssec:art_input}

The paper \cite{MML19} is devoted to second level uncertainty which corresponds to the uncertainty on the type of the input distributions and/or on the parameters of the input distributions. As mentioned by the authors, such uncertainties can be handled in two different manners: (1) aggregating them with no distinction \cite{chabridon2017evaluation,chabridon2018thesis} or (2) separating them  \cite{MML19}. In \cite{chabridon2017evaluation}, e.g., the uncertainty concerns the parameters of the input distributions. The authors study the expectation with respect to the distribution of the parameters of the conditional output. 
In \cite{chabridon2018thesis}, the second level uncertainties are transformed into first level uncertainties considering the aggregated vector containing the input random variables vector together with the vector of uncertain parameters.
Alternatively, in \cite{MML19}, the uncertainty brought by the lack of knowledge of the input distributions and the uncertainty of the random inputs are treated separately. A double Monte-Carlo algorithm is first considered. 
In the outer loop, a Monte-Carlo sample of input distribution is generated,
while the inner loop proceeds to a global sensitivity analysis associated to each distribution. A more efficient algorithm is also proposed with a unique Monte-Carlo loop. The sensitivity analysis is then performed using the so-called Hilbert-Schmidt dependence
measures (HSIC indices) on  the input distributions rather than the input random variables themselves. See, e.g., \cite{gretton2005measuring} for the definition of the HSIC indices and more details on the algorithms.

In \cite{morio2011influence}, a different approach is adopted. A failure  probability is studied while the uncertainty concerns the parameters of the input distributions. An algorithm with low computational cost is proposed  to handle such uncertainty together with the rare event setting. A single initial sample allows to compute the failure probabilities associated to different parameters of the input distributions. A similar idea is exploited in \cite{lemaitre2015density} in which the authors consider input perturbations. 
and Perturbed-Law based Indices that are used to quantify
the impact of a perturbation of an input p.d.f. on a failure probability. 
Analogously, the authors of \cite{GH19distribution,GH19marginal} are interested in (marginal) p.d.f. perturbations and the aim 
is to study the ``robustness of the Sobol indices to distributional uncertainty
and to marginal distribution uncertainty'' which correspond to  second level uncertainty. For instance, the basic idea of the approach  proposed in \cite{GH19distribution} is to view the total Sobol index as an operator which inputs the
p.d.f. and returns the Sobol index. Then the analysis of robustness is done computing and studying the Fr\'echet derivative of this operator. The same principle is used in \cite{GH19marginal} to treat the robustness with respect to the marginal distribution uncertainty.

Last but not least, it is worth mentioning the classical approach
of epistemic global sensitivity analysis of Dempster-Shafer theory (see, e.g.,  \cite{smets1994dempster,alvarez2009reduction}).
This theory describes the random variables together with an epistemic uncertainty traduced in terms of an associated epistemic variable $Z$ on a set $A$, a mass function representing
a probability measure on the set $\P(A)$ of all subsets $A$. This lack of knowledge leads to an upper and lower bound of the c.d.f.\ and can be viewed as a second level uncertainty.

\subsection{Link with stochastic computer codes}\label{ssec:link_code_sto}

We propose a new procedure that stems from the the methodology in the context of stochastic computer codes described in Section \ref{sec:codesto}.
We still denote by $\mu_i$ ($i=1,\ldots,p$) the distribution of the input $X_i$ ($i=1,\ldots,p$) in the model given by  \eqref{eq:model}. There are several ways to model the uncertainty with respect to the choice of each $\mu_i$. Here we adopt the following framework. We assume that each $\mu_i$ belongs to some family $\mathcal{P}_i$ of probability measures endowed with the probability measure $\mathbf{\mathbb{P}_{\mu_i}}$. In general, there might be measurability issues and the question of how to define a $\sigma-$field on some general spaces  $\mathcal{P}_i$ can be tricky. We will restrict our study to the simple case where the existence of the probability measure  $\mathbf{\mathbb{P}_{\mu_i}}$ on $\mathcal{P}_i$ is given by the construction of the set  $\mathcal{P}_i$. More precisely, we proceed as follows.

\begin{itemize}
\item First, for $1\leqslant i\leqslant p$, let $d_i$ be an integer and let $\Theta_i\subset \R^ {d_i}$. Then consider the probability  space $\left(\Theta_i,\mathcal{B}(\Theta_i),\nu_{\Theta_i}\right)$ where $\mathcal{B}(\Theta_i)$ is the Borel $\sigma-$field and $\nu_{\Theta_i}$ is a probability measure on 
$\left(\Theta_i,\mathcal{B}(\Theta_i)\right)$. 
\item Second, for $1\leqslant i\leqslant p$, we consider an identifiable parametric set of probability measure $\mathcal{P}_i$ on $E_i$:  $\mathcal{P}_i:=\{\mu_{\theta}, \theta\in\Theta_i\}$. Let us denote by $\pi_i$ the one-to-one mapping from $\Theta_i$ to $\mathcal{P}_i$ defined by $\pi_i(\theta):=\mu_\theta \in \mathcal P_i$ and  define the $\sigma-$field $\mathcal{F}_i$ on $\mathcal{P}_i$ by 
\begin{equation*}
A\in\mathcal{F}_i \iff \exists B\in\mathcal{B}(\Theta_i),\ A=\pi_i(B).
\end{equation*}
Then we endow this measurable space with  the   probability $\Pi_i$ defined, for any $A\in\mathcal{F}_i $, by 
\begin{equation*}
\Pi_i(A)=\nu_{\Theta_i}\left(\pi_i^{-1}(A)\right).
\end{equation*}
\item Third, in order to perform a second level sensitivity analysis on   \eqref{eq:model}, we introduce the stochastic  mapping $f_s$ from $\mathcal{P}_1\times\ldots\times\mathcal{P}_p$  to  $\mathcal{X}$ defined by 
\begin{equation}\label{eq:fS}
f_s\left(\mu_1,\ldots,\mu_p\right)= f(X_1,\ldots,X_p)
\end{equation}
where $X_1,\ldots,X_p$ are independently drawn according to the distribution $\mu_1\times \ldots \times \mu_p$. Hence $f_s$ is a stochastic computer code from 
$\mathcal{P}_1\times\ldots\times \mathcal{P}_p$  to  $\mathcal{X}$ and once the probability measures $\P_{\mu_i}$ on each $\mathcal{P}_i$ are defined, we can perform sensitivity analysis using the framework of Section \ref{sec:codesto}.
\end{itemize}

\subsection{Numerical study}\label{ssec:num_app_second_level}


As in \cite{GH19marginal}, let us consider the synthetic example defined on $[0,1]^3$ by
\begin{align}\label{def:gremaud}
f(X_1,X_2,X_3)=2X_2e^{-2X_1}+X_3^2.
\end{align}
We are interested in the uncertainty in the support of the random variables $X_1$, $X_2$ and $X_3$. To do so, we follow the notation and framework of \cite{GH19marginal}. For $i=1$, 2, and 3, we assume that $X_i$  is uniformly distributed on the interval $[A_i,B_i]$, where $A_i$ and $B_i$ are themselves uniformly distributed on $[0,0.1]$ and $[0.9,1]$ respectively. As remarked in \cite{GH19marginal}, it seems natural that $f$ will vary more in the $X_2$-direction when $X_1$ is close to 0 and less when $X_1$ is close to 1.

As mentioned in Section \ref{ssec:art_input}, the authors of \cite{GH19marginal} view the total Sobol index as an operator which inputs the p.d.f.\ and returns the total Sobol index. Then they study the Fr\'echet derivative of this operator and determine the most influential p.d.f., which depends on a parameter denoted by $\delta$. Finally, they make vary this parameter $\delta$.

Here, we adopt the methodology explained in the previous section (Section \ref{ssec:link_code_sto}). Namely, we consider the stochastic computer code given by:
\begin{align}
f_s(\mu_1,\mu_2,\mu_3)=2X_2e^{-2X_1}+X_3^2,
\end{align}
where the $X_i$'s are independently drawn according to the uniform measure $\mu_i$ on $[A_i,B_i]$ with $A_i$ and $B_i$ themselves uniformly distributed on $[0,0.1]$ and $[0.9,1]$ respectively. Then to  estimate the indices $S_{2,W_2}^{i}$, for $i=1$, 2, and 3, we proceed as follows.
\begin{enumerate}
\item For $i=1$, 2, and 3, we produce a $N$-sample $\left([A_{i,j},B_{i,j}]\right)_{j=1,\ldots,N}$ of intervals $[A_i,B_i]$.  
\item For $i=1$, 2, and 3,  and, for $j=1,\ldots,N$, we generate a $n$-sample $\left(X_{i,j,k}\right)_{k=1,\ldots,n}$ of $X_i$, where $X_{i,j,k}$ is uniformly distributed on $[A_{i,j},B_{i,j}]$. 
\item For $j=1,\ldots,N$, we compute the $n$-sample $\left(Y_{j,k}\right)_{k=1,\ldots,n}$ of the output using
\[
Y=f(X_1,X_2,X_3)=2X_2e^{-2X_1}+X_3^2.
\]
Thus we get a $N$-sample of the empirical measures of the distribution of the output $Y$ given by:
\[
\mu_{j,n}=\frac 1n \sum_{k=1}^n \delta_{Y_{j,k}}, \quad \text{for $j=1,\ldots,N$}. 
\]
\item For $i=1$, 2, and 3, we order the intervals $\left([A_{i,j},B_{i,j}]\right)_{j=1,\ldots,N}$ and get the Pick-Freeze versions of $Y$ to treat the sensitivity analysis regarding the $i$-th input. 
\item Finally, it remains to compute the indicators of the empirical version of \eqref{eq:SobolW_PF} using \eqref{eq:mes_emp} and their means to get the Pick-Freeze estimators of $S^{\textbf{u}}_{2,W_2}$, for $\textbf{u}=\{1\}$, $\{2\}$, $\{3\}$, $\{1,2\}$, $\{1,3\}$, and $\{2,3\}$. 
\end{enumerate} 
Notice that we only consider the estimators based on the Pick-Freeze method since we allow for both bounds of the interval to vary
and, as explained previously, the rank-based procedure has not been developed yet neither for higher order indices nor in higher dimensions.

First, we compute the estimators of $S^{\textbf u}_{2,W_2}$ following the previous procedure with a sample size $N=500$ and an approximation size $n=500$. We also perform another batch of simulations allowing for higher variability on the bounds: $A_i$ is now uniformly distributed on $[0,0.45]$ while $B_i$ is now uniformly distributed on $[0.55,1]$. The results are displayed in Table \ref{tab:first}.
\begin{table}[h]\begin{center}
\begin{tabular}{c|c|c|c|c|c|c|c}
 & $\textbf u $ & $\{1\}$ & $\{2\}$ & $\{3\}$ & $\{1,2\}$ & $\{1,3\}$ & $\{2,3\}$ \\
\hline
$A_i\in [0,0.1]$  &&&&&&&\\
$B_i\in [0.9,1]$ & $\hat S^{\textbf u}_{2,W_2}$ & 0.07022 &  0.08791 &  0.09236  & 0.14467  & 0.21839  & 0.19066\\
\hline
$A_i\in [0,0.45]$ &&&&&&&\\
$B_i\in [0.55,1]$ & $\hat S^{\textbf u}_{2,W_2}$ &
   0.11587 &  0.06542  & 0.169529 &  0.22647  & 0.40848 &  0.34913\\
\end{tabular}
\end{center}
\caption{\label{tab:first} Model \eqref{def:gremaud}. GSA on the parameters of the input distributions. Estimations of $S^{\textbf u}_{2,W_2}$ with a sample size $N=500$ and an approximation size $n=500$. In the first row, $A_i$ is uniformly distributed on $[0,0.1]$ while $B_i$ is uniformly distributed on $[0.55,1]$. In the second row, we allow for more variability: $A_i$ is  uniformly distributed on $[0,0.45]$ while $B_i$ is uniformly distributed on $[0.55,1]$.}
\end{table}
Second, we run another simulations allowing for more variability on the upper bound related to the third input $X_3$ only: $B_3$ is uniformly distributed on $[0.5,1]$ (instead of $[0.9,1]$).  The results are displayed in Table \ref{tab:second}. We still use a sample size $N=500$ and an approximation size $n=500$.
\begin{table}[h]
\begin{center}
\begin{tabular}{c|c|c|c|c|c|c}
$\textbf u $ & $\{1\}$ & $\{2\}$ & $\{3\}$ & $\{1,2\}$ & $\{1,3\}$ & $\{2,3\}$ \\
\hline
&&&&&&\\
$\hat S^{\textbf u}_{2,W_2}$ & 0.01196  & 0.06069 &  0.56176 & -0.01723 &  0.63830  & 0.59434\\
\end{tabular}
\end{center}
\caption{\label{tab:second} Model \eqref{def:gremaud}. GSA on the parameters of the input distributions. Estimations of $S^{\textbf u}_{2,W_2}$ with a sample size $N=500$ and an approximation size $n=500$ and more variability on $B_3$, now uniformly distributed on $[0.5,1]$. }
\end{table}
Third, we perform a classical GSA on the inputs rather than on the parameters of their distributions. Namely, we estimate the index $S^{\textbf u}_{2,CVM}$ with a sample size $N=10^4$. The reader is referred to \cite[Section 3]{GKL18} for the definition of the index $S^{\textbf u}_{2,CVM}$ and its Pick-Freeze estimator together with their properties. The results are displayed in Table \ref{tab:third}.
\begin{table}[h]
\begin{center}
\begin{tabular}{c|c|c|c|c|c|c}
$\textbf u $ & $\{1\}$ & $\{2\}$ & $\{3\}$ & $\{1,2\}$ & $\{1,3\}$ & $\{2,3\}$ \\
\hline
&&&&&&\\
$\hat S^{\textbf u}_{2,W_2}$ & 0.13717  & 0.15317  & 0.33889  & 0.33405  & 0.468163  & 0.53536\\
\end{tabular}
\end{center}
\caption{\label{tab:third} Model \eqref{def:gremaud}. Direct GSA on the inputs. Estimations of $S^{\textbf u}_{2,CVM}$  with a sample size $N=10^4$. The reader is referred to \cite[Section 3]{GKL18} for the definition of the index $S^{\textbf u}_{2,CVM}$ and its Pick-Freeze estimator together with their properties.}
\end{table}
%
%
%
%
%

In Table \ref{tab:third}, we see that the Cram\'er-von-Mises index related to $X_3$ is more than twice as important as $X_1$ and $X_2$ (when considering only first order effects).  Nevertheless, when one is interested in the choice of the input distributions of $X_1$, $X_2$, and $X_3$, the first row in Table \ref{tab:first} shows that each choice is equally  important. Now, if one give more freedom to the space where the distribution lives, the relative importance may change as one can see in Table \ref{tab:second} (second row) and in Table \ref{tab:third}.  More precisely, in Table \ref{tab:second}, the variability of the third input distribution (namely, the variability of its upper bound) is five times bigger than the other variabilities. Not surprisingly, it results that the importance of the choice of the third input distribution is then much more important than the choices of the distributions of the two first  inputs.


\section{Conclusion}\label{sec:concl}

In this article, we present a very general way to perform sensitivity analysis when the output $Z$ of a computer code lives in a metric space. The main idea is to consider real-valued squared integrable  test functions $(T_a(Z))_{a\in\Omega}$ parameterized  by a finite number of elements of a probability space. Then  Hoeffding decomposition of the test functions $T_a (Z)$ is computed and integrated with respect to the parameter  $a$.  This very general and flexible definition allows, in one hand, to recover a lot of classical indices (namely, the Sobol indices and the Cram\'er-von-Mises indices)  and, in the other hand, to perform a well tailored  and interpretable sensitivity analysis. Furthermore, a sensitivity analysis is also made possible for computer codes the output of which is a c.d.f., for stochastic computer codes (that are seen as an approximation of c.d.f.-valued computer codes). Last but not least, it enables also to perform second level sensitivity analysis by embedding second level sensitivity analysis as a particular case of stochastic computer codes. 

\paragraph{Acknowledgment}

\appendix

\section{Proofs}\label{sec:proof}

\subsection{Notation}

It is convenient to have short expressions for terms that converge in probability to zero. We follow \cite{van2000asymptotic}.
The notation $o_{\P}(1)$ (respectively $O_{\P}(1)$) stands for a sequence of random variables that converges to zero in probability (resp. is bounded in probability) as $n \to \infty$. More generally, for a sequence of random variables $R_n$,
\begin{align*}
X_n&=o_{\P}(R_n) \quad \textrm{means} \quad X_n=Y_nR_n \quad \textrm{with} \quad Y_n\overset{\P}{\rightarrow}0\\
X_n&=O_{\P}(R_n) \quad \textrm{means} \quad X_n=Y_nR_n \quad \textrm{with} \quad Y_n=O_{\P}(1).
\end{align*}
For deterministic sequences $X_n$ and $R_n$, the stochastic notation reduce to the usual $o$ and $O$.
Finally, $c$ stands for a generic constant that may differ from one line to another.

\subsection{Proof of Proposition \ref{prop:density} }

One has
\[
\sqrt{N} \left(\widehat S_{2,W_q,\text{Ustat},n}^{\textbf u}-S_{2,GMS}^{\textbf u}\right)
=\sqrt{N} \left(\widehat S_{2,W_q,\text{Ustat},n}^{\textbf u}-\widehat{S}_{2,GMS,\text{Ustat}}^{\textbf u}\right)+\sqrt{N} \left(\widehat{S}_{2,GMS,\text{Ustat}}^{\textbf u}-S_{2,GMS}^{\textbf u}\right).
\]
By \cite[Theorem 2.3]{GKLM19}, the second term in the right hand side of the previous equation is asymptotically Gaussian. If we prove that the first term in the right hand side is $o_{\P}(1)$, then by Slutsky's Lemma \cite[Lemma 2.8]{van2000asymptotic}, $\sqrt{N} \left(\widehat{S}_{2,GMS,\text{Ustat},n}^{\textbf u}-S_{2,GMS}^{\textbf u}\right)$ is asymptotically Gaussian.

Now we prove that $\sqrt{N} \left(\widehat{S}_{2,GMS,\text{Ustat},n}^{\textbf u}-\widehat{S}_{2,GMS,\text{Ustat}}^{\textbf u}\right)=o_{\P}(1)$. We write 
\begin{align*}
&\widehat S_{2,W_q,\text{Ustat},n}^{\textbf u}-\widehat{S}_{2,GMS,\text{Ustat}}^{\textbf u} = \Psi(U_{1,N,n},U_{2,N,n},U_{3,N,n},U_{4,N,n})-\Psi(U_{1,N},U_{2,N},U_{3,N},U_{4,N})\\
&=\frac{\left[(U_{1,N,n}-U_{1,N})-(U_{2,N,n}-U_{2,N})\right](U_{3,N}-U_{4,N})}{\left[(U_{3,N,n}-U_{3,N})-(U_{4,N,n}-U_{4,N})+(U_{3,N}-U_{4,N})\right](U_{3,N}-U_{4,N})}\\
&\quad -\frac{\left[(U_{3,N,n}-U_{3,N})-(U_{4,N,n}-U_{4,N})\right](U_{1,N}-U_{2,N})}{\left[(U_{3,N,n}-U_{3,N})-(U_{4,N,n}-U_{4,N})+(U_{3,N}-U_{4,N})\right](U_{3,N}-U_{4,N})}.
\end{align*}
Since $(U_{l,N,n}-U_{l,N,n})$, for $l=3$ and $4$ and $(U_{3,N}-U_{4,N})$ converges almost surely respectively to 0 and $I(\Phi_3)-I(\Phi_4)$, the denominator converges almost surely. Thus it suffices to prove that the numerator is $o_{\P}(1/\sqrt N)$ which reduces to prove that 
 $\sqrt{N}\left(U_{l,N,n}-U_{l,N}\right)=o_{\P}(1)$ for $l=1,\dots,4$, where $U_{l,N,n}$ (respectively $U_{l,N}$) has been defined in \eqref{def:estU_n} (resp. \eqref{def:estU}). Let $l=1$ for example. The other terms can be treated analogously. Here, $m(1)=3$. We write
\begin{align*}
\E&\left[\abs{U_{1,N,n}-U_{1,N}}\right]\\
&\leqslant \begin{pmatrix}N\\3\end{pmatrix}^{-1}(3!)^{-1}\sum_{\substack{1\leq i_1<i_2<i_{3}\leq N\\\tau\in \mathcal S_3}}
\E\left[\abs{\Phi_1\left(\bmu_{\tau(i_1),n},\bmu_{\tau(i_2),n},\bmu_{\tau(i_{3}),n}\right)
-\Phi_1\left(\bmu_{\tau(i_1)},\bmu_{\tau(i_2)},\bmu_{\tau(i_{3})}\right)}\right]\\
&=\E\left[\abs{\Phi_1\left(\bmu_{1,n},\dots\bmu_{2,n},\bmu_{3,n}\right)
-\Phi_1\left(\bmu_1,\bmu_2,\bmu_3\right)} \right]\\
&  \leqslant 2 \E\left[ \abs{\ind_{W_q(\mu_1,\mu_3)\leqslant W_q(\mu_1,\mu_2)}-\ind_{W_q(\mu_{1,n},\mu_{3,n})\leqslant W_q(\mu_{1,n},\mu_{2,n})}}\right]\\
&\eqdef 2 \E\left[B_n\right]
\end{align*}
where the random variable $B_n$ in the expectation in the right hand side of the previous inequality is a Bernoulli random variable whose distribution does not depend on $(1,2,3)$. Let $\Delta(N)$ be the following event 
\[
\Delta(N)=\left\{
\abs{W_q(\mu_{\tau(1)},\mu_{\tau(3)})- W_q(\mu_{\tau(1)},\mu_{\tau(2)})}\geqslant \delta(N)
\right\}.
\]
Obviously, we get
$\E\left[B_n\ind_{\Delta(N)^c}\right]\leqslant \P(\Delta(N)^c)$,
where $A^c$ stands for the complementary of $A$ in $\Omega$. Furthermore,
\begin{align*}
\E\left[B_n\ind_{\Delta(N)}\right]&\leqslant \E\left[B_n\vert\Delta(N)\right]
= \P\left(B_n=1\vert\Delta(N)\right)\\
&\leqslant \sum_{r=1}^3 \P\left(W_q(\mu_{r},\mu_{r,n})\geqslant \frac{\delta(N)}{4}\right)\\
&\leqslant \frac{12}{\delta(N)}\E[W_q(\mu_{1},\mu_{1,n})].
\end{align*}
Finally, we introduce $\varepsilon >0$ and we study:
\begin{align*}
\P\left(\sqrt N\abs{U_{1,N,n}-U_{1,N}}\geqslant \varepsilon \right)
&\leqslant \frac{\sqrt N}{\varepsilon} \E\left[\abs{U_{1,N,n}-U_{1,N}}\right]\\
&\leqslant 2 \frac{\sqrt N}{\varepsilon} \E\left[B_n\right]\\
&\leqslant \frac{\sqrt N}{\varepsilon} \frac{24}{\delta(N)}\E[W_q(\mu_{1},\mu_{1,n})]+ 2\frac{\sqrt N}{\varepsilon} \P(\Delta(N)^c).
\end{align*}
It remains to choose first, $\delta(N)$ so that $\P(\Delta(N)^c)=o\left(1/\sqrt N\right)$
and second, $n$ such that $\E[W_q(\mu_{1},\mu_{1,n})]=o(\delta(N)/\sqrt N )$. Consequently, $\sqrt N(U_{1,N,n}-U_{1,N})=o_{\P}(1)$. Analogously, one gets $\sqrt N(U_{l,N,n}-U_{l,N})=o_{\P}(1)$ for $l$=2, 3 and 4.

\bibliographystyle{abbrv}
\bibliography{biblio_Wasserstein_AS}

\end{document}